\newcounter{@ToDo}
\newcommand{\todo@helper}[1]{%
	({\color{blue}TODO~\arabic{@ToDo}: {#1\@addpunct{.}}})%
}
\newcommand{\todo}[1]{\stepcounter{@ToDo}%
	\relax\ifmmode\text{\todo@helper{#1}}%
	\else\todo@helper{#1}\fi%
}
\numberwithin{equation}{section}
\theoremstyle{plain}
\newtheorem{theorem}{Theorem}[section]
\newtheorem{lemma}[theorem]{Lemma}
\newtheorem{corollary}[theorem]{Corollary}
\theoremstyle{definition}
\newtheorem{remark}[theorem]{Remark}
\newcommand{\N}{\mathbb{N}}
\newcommand{\R}{\mathbb{R}}
\newcommand{\Z}{\mathbb{Z}}
\newcommand{\cC}{\mathcal{C}}
\newcommand{\cE}{\mathcal{E}}
\newcommand{\cF}{\mathcal{F}}
\newcommand{\cI}{\mathcal{I}}
\newcommand{\cJ}{\mathcal{J}}
\newcommand{\cL}{\mathcal{L}}
\newcommand{\cM}{\mathcal{M}}
\newcommand{\fkN}{\mathfrak{N}}
\newcommand{\bdJ}{\mathbf{J}}
\newcommand{\bde}{\mathbf{e}}
\newcommand{\bdj}{\mathbf{j}}
\newcommand{\ud}{\mathrm{d}}
\newcommand{\supp}{\mathrm{supp}\,}
\newcommand{\dist}{\mathrm{dist}}
\newcommand{\tvec}{T_0, \ldots, T_{n-2}}
\newcommand{\svec}{s_0, \ldots, s_{n-2}}
\begin{document}

\title[Integral points near space curves]{Counting integral points near space curves: a Fourier analytic approach}

\date{}

\begin{abstract} We establish upper and lower bounds for the number of integral points which lie within a neighbourhood of a smooth nondegenerate curve in $\mathbb{R}^n$ for $n\geq 3$. These estimates are new for $n\geq 4$, and we recover an earlier result of J. J. Huang for $n=3$. However, we do so by using Fourier analytic techniques which, in contrast with the method of Huang, do not require the sharp counting result for planar curves as an input. In particular, we rely on an Arkhipov--Chubarikov--Karatsuba-type oscillatory integral estimate.
\end{abstract}

\author[J. Hickman]{ Jonathan Hickman }
\address{School of Mathematics and Maxwell Institute for Mathematical Sciences, James Clerk Maxwell Building, The King's Buildings, Peter Guthrie Tait Road, Edinburgh, EH9 3FD, UK.}
\email{jonathan.hickman@ed.ac.uk}

\author[R. Srivastava]{ Rajula Srivastava }
\address{Mathematical Institute of the University of Bonn and Max Planck Institute for Mathematics, Bonn
Endenicher Allee 60, 53115, Bonn, Germany.}
\email{rajulas@math.uni-bonn.de}

\maketitle



\section{Introduction}



\subsection{Main result}

In this note, we establish upper and lower bounds for the number of integral points close to nondegenerate smooth curves in $\mathbb{R}^n$. To the best of our knowledge, our result is the first explicit non-trivial estimate for curves in dimensions four or greater. In the case $n=3$, we recover a result of J. J. Huang from \cite{huangintegral19} up to $\varepsilon$ losses, using Fourier analytic methods.\footnote{Huang remarks in \cite[p.1991]{huangintegral19} that, in principle, the argument used in \cite{huangintegral19} in $\R^3$ could be extended to obtain bounds for lattice points close to a curve in $\mathbb{R}^n$ with $n\ge4$. However, no explicit estimates are given and it is stated that the quality of the resulting estimates deteriorates rapidly as $n$ increases.}

Let $\cM$ be a compact $C^{\infty}$ curve in $\mathbb{R}^n$; that is, $\cM$ is the image of a regular, injective $C^{\infty}$ map $\gamma: I\to \mathbb{R}^n$, where $I$ is a compact interval. Our goal is to estimate the number of lattice points lying in a neighbourhood of a fixed dilate of $\cM$. More precisely, for $q\in \mathbb{N}$ and $0 < \delta < 1/2$, we let $N_{\cM}(q, \delta)$ denote the cardinality of the set
\begin{equation*}
\big\{ b \in \Z^n : \textrm{dist}\left(b/q, \cM\right)< \delta/q  \big\}.
\end{equation*}
Here $\textrm{dist}$ denotes the distance with respect to the Euclidean norm on $\mathbb{R}^n$.
Let $U\subset\mathbb{R}$ be an open set containing the interval concentric to $I$ but with twice the radius. We assume that $\gamma$ extends to a smooth map on $U$ which satisfies the \textit{non-degeneracy} condition
\begin{equation}
    \label{eq cond non-deg gam}
    \det \begin{bmatrix}
        \gamma^{(1)}(t)& \gamma^{(2)}(t) & \ldots & \gamma^{(n)}(t) 
    \end{bmatrix} \neq 0 \qquad \textrm{for all $t\in U$.}
\end{equation}
Finally, we introduce the exponent
\begin{equation}
    \label{def theta}
    \Theta(n):=\begin{cases}
       \displaystyle \frac{n^2+4}{n(n+4)} & \textrm{ for even } n,\\[8pt]
      \displaystyle  \frac{n^2+3}{n(n+4)-1} & \textrm{ for odd } n.
    \end{cases}
\end{equation}
With these definitions, our main result reads as follows. 
\begin{theorem}
\label{thm main}
For $n\geq 3$, let $\cM$ be as in \eqref{eq curve}. Let $\nu>0$. Suppose condition \eqref{eq cond non-deg gam} holds, and let $\Theta$ be as defined in \eqref{def theta}. Then for all $\delta\in (0, 1/4)$, we have
\begin{equation}
\label{eq main est}
{N}_{\mathcal{M}}(q, \delta)\ll_{\nu} \,\delta^{n-1} q+q^{\Theta(n)+\nu},
\end{equation}
where the implicit constant depends only on $\cM$ and $\nu$.
Further, for $q\gg_{\nu} 1$ and
\begin{equation}
    \label{eq delta range}
    \delta \geq \begin{cases}
    q^{-\frac{4}{n(n+4)}+\nu} & \textrm{ for even } n,\\
    q^{-\frac{4}{n(n+4)-1}+\nu} & \textrm{ for odd } n,    
    \end{cases}
\end{equation}
we also have the lower bound
\begin{equation}
    \label{eq main lb}
    {N}_{\mathcal{M}}(q, \delta)\gg_{\nu} \,\delta^{n-1} q.
\end{equation}
\end{theorem}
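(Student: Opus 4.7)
The plan is to estimate $N_\mathcal{M}(q,\delta)$ by Fourier analysis via Poisson summation. Choose a nonnegative Schwartz function $\phi$ with $\phi \ge 1$ on the unit ball and $\hat\phi$ compactly supported, and a smooth bump $\eta$ adapted to $I$. Writing $\phi_\delta(y) := \delta^{-n}\phi(y/\delta)$, the function
\[
F(x) := \int_I \phi_\delta(x-q\gamma(t))\,\eta(t)\,dt
\]
satisfies $F(b) \gtrsim (q\delta^{n-1})^{-1}$ for every $b \in \mathbb{Z}^n$ in the $\delta$-tube around $q\gamma(I)$, hence
\[
N_\mathcal{M}(q,\delta) \ll q\delta^{n-1}\sum_{b\in\mathbb{Z}^n}F(b) = q\delta^{n-1}\sum_{k\in\mathbb{Z}^n}\hat\phi(\delta k)\,J(k),
\]
where $J(k) := \int_I \eta(t)\,e^{-2\pi iqk\cdot\gamma(t)}\,dt$ and Poisson summation has been applied. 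The $k=0$ term contributes the anticipated main term $O(q\delta^{n-1})$.

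The heart of the argument is bounding the nonzero-frequency contribution. The plan is to invoke an Arkhipov--Chubarikov--Karatsuba-type estimate, schematically of the shape $|J(k)| \ll_\nu (q|k|)^{-\rho(n)+\nu}$ for a dimension-dependent exponent $\rho(n)$ strictly better than the $1/n$ provided by a direct van der Corput bound on the $n$-th derivative. Establishing this sharp bound is the principal technical obstacle: it calls for decomposing $I$ into subintervals on each of which a dominant derivative of the phase $qk\cdot\gamma(t)$ is identified, and then chaining refined oscillatory integral estimates that exploit the full nondegeneracy condition \eqref{eq cond non-deg gam}.

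Granted such an estimate, rapid decay of $\hat\phi(\delta k)$ truncates the sum at $|k| \lesssim \delta^{-1}$, and dyadic decomposition over shells $|k|\sim K$ contributes $\lesssim q^{-\rho(n)+\nu}K^{n-\rho(n)}$. The largest shell $K \sim \delta^{-1}$ dominates, so
\[
N_\mathcal{M}(q,\delta) \ll q\delta^{n-1} + q^{1-\rho(n)+\nu}\,\delta^{\rho(n)-1+\nu}.
\]
The exponent $\rho(n)$ must be calibrated so that the two terms balance precisely at the critical scale $\delta_0 = q^{(\Theta(n)-1)/(n-1)}$, at which point the second term equals $q^{\Theta(n)+\nu}$ and \eqref{eq main est} holds. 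For $\delta < \delta_0$ one instead appeals to monotonicity of $N_\mathcal{M}$ in $\delta$ to deduce the bound from the threshold case.

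For the lower bound \eqref{eq main lb}, the plan is to build a nonnegative minorant of the tube-indicator by the analogous Fourier template; Poisson summation then yields $N_\mathcal{M}(q,\delta) \gtrsim q\delta^{n-1} - |\text{error}|$, with the error still controlled by the ACK-type bound. This error is dominated by the main term precisely when $\delta \gtrsim q^{-\rho(n)/(n-\rho(n))}$, which unpacks to the range \eqref{eq delta range}. Throughout, the main obstacle is the ACK-type oscillatory integral estimate with the correct $\rho(n)$; with that in hand the rest of the argument is essentially bookkeeping on the Fourier sum.
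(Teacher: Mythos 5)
Your Poisson-summation setup and the broad strategy of isolating the $k=0$ term and estimating the remainder via oscillatory integral decay are consistent with the paper. The monotonicity argument to handle small $\delta$ and the minorant construction for the lower bound are also in line with what is done. However, there is a genuine gap at the heart of the argument.

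You posit a \emph{uniform} pointwise Arkhipov--Chubarikov--Karatsuba-type bound of the form $|J(k)| \ll_\nu (q|k|)^{-\rho(n)+\nu}$ with $\rho(n) > 1/n$. No such estimate exists. The van der Corput bound $|J(k)| \ll (q|k|)^{-1/n}$ is actually sharp for certain directions $k$: those for which the phase $t \mapsto k\cdot\gamma(t)$ has its first $n-1$ derivatives simultaneously small somewhere on $I$. Such $k$ form a nonempty set (concentrated near a $2$-dimensional cone $\Gamma^2 \subset \widehat{\R}^n$), and for those $k$ the decay rate cannot be improved. The correct form of the ACK estimate (Theorem~\ref{thm: ACK} in the paper) is $|J(k)| \ll (1 + H_\gamma(qk))^{-1}$, where $H_\gamma$ is the $H$-functional of \eqref{eq def h func}; this decay rate varies with the direction of $k$ and is only $\sim (q|k|)^{-1/n}$ in the worst case. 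Summing the worst-case bound over all $|k| \lesssim \delta^{-1}$, as your dyadic-shell computation effectively does, recovers only the weaker range $\delta > q^{-1/(n^2-1)+\nu}$, which the paper explicitly notes as the trivial outcome of van der Corput alone.

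What is missing from your proposal is the central mechanism that produces the improved exponent $\Theta(n)$: a level-set decomposition of the frequencies according to the size of $H_\gamma(qk)$ (the sets $S^{q,R}(\lambda)$ of \eqref{eq: sublevel}), together with a lattice point count showing that only $O(R^{d+1}\min\{1, \lambda^{n-d+1}/(qR)\})$ of the $\approx R^n$ lattice points at scale $R$ have slow decay $H \sim \lambda$. This count is obtained by reinterpreting the sublevel sets as $O(1)$-neighbourhoods of a scaled cone $\cF(T)$ built from the Frenet frame, and then invoking a volume bound (Lemma~\ref{lem triv count} and Corollary~\ref{cor triv count}). The gain over van der Corput is not a better pointwise decay rate but a sparsity statement about the frequencies that attain the worst rate. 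Without that sparsity argument the calibration of $\rho(n)$ to match \eqref{def theta} is not achievable, and the proof cannot close.
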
 

Here, given a list of objects $L$ and non-negative real numbers $A$, $B$, we write $A \ll_L B$ or $B \gg_L A$ or $A = O_L(B)$ if $A \leq C_L B$ where $C_L > 0$ is a constant depending only on the objects listed in $L$, and possibly a choice of dimension $n$ and underlying curve $\cM$. We write $A \sim_L B$ when both $A \ll_L B$ and $B \gg_L A$.

\subsection{Background}

The problem of counting lattice/integral points near dilates of a fixed planar curve is well-studied, with sharp asymptotics known in great generality: see, for instance, \cite{huxley1996area, trifonov2002lattice, huang2020parab}. We remark in passing that this question is closely related to the classical Gauss circle problem. For the case where $\cM$ is a convex hypersurface, optimal bounds appear in the work of Lettington in three and higher dimensions \cite{lettington1, lettington2}.

Much less is known for space curves in $\R^n$ for $n \geq 3$. J. J. Huang \cite{huangintegral19} made the first non-trivial progress on the counting problem for integral points near nondegenerate curves in $\mathbb{R}^3$. In \cite{huangintegral19}, for $\cM$ a compact $C^{3}$ curve in $\mathbb{R}^3$ with non-vanishing curvature and torsion (in other words, satisfying condition \eqref{eq cond non-deg gam}), he established the upper bound
$$
N_{\cM}(q, \delta)\ll \delta^2q+q^{\frac35}(\log q)^{\frac45}.
$$
Further, he also proved a matching lower bound; that is, he showed that when $\delta \gg q^{-\frac15}(\log q)^{\frac25}$ and $q\gg 1$ we have
$$
N_{\cM}(q, \delta)\gg \delta^2q.
$$
The key innovation in his approach was an induction scheme that reduced the problem of counting integral points near space curves in three dimensions to the problem of counting \textit{rational points} near planar curves. Specifically, the argument exploits the optimal count for rational points near a planar curve $\cC$ with non-vanishing curvature,
\begin{equation}
    \label{eq rat pt planar}
    \sum_{q=1}^{Q}N_{\cC}(q, \delta)\ll_{\nu} \delta Q^{2}+ Q^{1+\nu}.
\end{equation}

The estimate \eqref{eq rat pt planar} for rational points near a sufficiently smooth planar curve $\cC$  with non-vanishing curvature was 
established by Vaughan--Velani in \cite{vaughan2006diophantine}. Their proof uses previous near-optimal estimates of Huxley \cite{huxley1994rational} for the same counting function as an input. In \cite{huang2018plancurves}, 
J. J. Huang built on these works to establish an asymptotic formula for this count. 

More generally, the problem of counting rational points near manifolds has seen rapid development in the recent years. Given a manifold $\cM$, one typically expects a significantly better bound for 
\begin{equation}
    \label{eq def rational pt}
    N_{\cM}^{\textrm{rat}}(Q, \delta):=\sum_{q=1}^Q N_{\cM}(q, \delta)
\end{equation}
than that obtained by trivially summing up the estimates for $N_{\cM}(q, \delta)$ for a fixed $q$. While interesting in its own right, this problem is also closely related to questions in Diophantine approximation and the dimension growth problem for submanifolds of $\mathbb{R}^n$ (see Section 1.2 and Conjecture 1.7 in \cite{rs24}). For recent results on the rational counting problem see \cite{Bers12, sst2023rational, huanghypers20, st2023, chen2025rational, by23, rs24}.

Finally, we remark in passing that the problem of counting integral points \textit{on} curves is classical and well studied. We refer the interested reader to \cite{jarnik1926gitterpunkte, swinnerton1974number, bombieripila, pila1991geometric, ellenberg2005uniform,heath2002density, walsh2015bounded} and the references therein.

\subsection{Conjectural bounds}
The moment curve, given by $\gamma_{\circ}(t):=(t,t^2, \ldots, t^n)$, is the prototypical example of a curve in $\mathbb{R}^n$ satisfying the non-degeneracy condition \eqref{eq cond non-deg gam}. For $\cM_{\circ} :=\{(t, t^2, \ldots, t^n): t\in [-1,1]\}$, there exist infinitely many $q \in \N$ such that
\begin{equation}\label{eq: easy count}
   N_{\cM_{\circ}}(q, 0) := q\cM_{\circ}\cap\mathbb{Z}^n\gg q^{\frac{1}{n}}. 
\end{equation}
Indeed, suppose $q$ of the form $q=z^n$ where $z\in \mathbb{N}$. Then for each $a\in \{0, 1, \ldots, z-1\}$, we have
$$q\left(\frac{a}{z}, \frac{a^2}{z^2}, \ldots, \frac{a^n}{z^n}\right)\in q\cM_{\circ}\cap\mathbb{Z}^n.$$

In view of \eqref{eq: easy count}, for any $\delta\in (0, 1/4)$, we automatically have $N_{\cM_{\circ}}(q, \delta)\gg q^{\frac{1}{n}}$ for infinitely many $q \in \N$. It is therefore natural to conjecture that
\begin{equation*}
    N_{\cM_{\circ}}(q, \delta)\ll \delta^{n-1}q +  q^{\frac{1}{n}} \qquad \textrm{for $q\geq 1$ and $\delta\in (0, 1/4)$.}
\end{equation*}
Furthermore, applying the same reasoning as above, a lower bound of the form
\begin{equation*}
    N_{\cM_{\circ}}(q, \delta)\gg \delta^{n-1} q
\end{equation*}
can only be true in the range $\delta> q^{-\frac{1}{n}}$, at least when $q$ is an $n$th power. Thus, we expect both the upper bound \eqref{eq main est} from Theorem~\ref{thm main}, as well as the range of $\delta$ in \eqref{eq delta range} for the lower bound, are far from sharp. 

As mentioned before, one expects the estimates for the number of rational points near $\cM_{\circ}$, as defined in \eqref{eq def rational pt}, to be even better due to the additional cancellation in the $q$ variable. Moreover, for a curve $\cM$ in $\mathbb{R}^n$ satisfying \eqref{eq cond non-deg gam}, it is widely conjectured that, for all $\varepsilon>0$, the bound
\begin{equation*}
    N_{\cM}^{\textrm{rat}}(Q, \delta)\ll_{\varepsilon} \delta^{n-1}Q^2 +  Q^{1+\varepsilon},
\end{equation*}
holds for any $Q\geq 1$, $\delta\in (0, 1/4)$. Indeed, the aforementioned upper bound would be a corollary of \cite[Conjecture 3.1]{huang2024extremal}, formulated for manifolds of arbitrary codimension under a more general non-degeneracy condition. When $\cM$ is a curve in $\mathbb{R}^n$ satisfying \eqref{eq cond non-deg gam}, this conjecture states that, for all $\nu>0$, the bound
$$N_{\cM}^{\textrm{rat}}(Q, \delta)\ll \delta^{n-1}Q^2 \qquad \textrm{ holds for } \delta\geq Q^{-\frac{1}{n-1}+\nu} \textrm{ and } Q\to \infty.
$$
Observe that for $\delta\leq Q^{-\frac{1}{n-1}+\nu}$, the above combined with the monotonicity of the counting function in $\delta$ would imply that
$$N_{\cM}^{\textrm{rat}}(Q, \delta)\leq N_{\cM}^{\textrm{rat}}(Q, Q^{-\frac{1}{n-1}+\nu})\ll Q^{1+\varepsilon}, \qquad \textrm{ with } \varepsilon=(n-1)\nu \textrm{ and as } Q\to \infty.
$$


In the other direction, in a breakthrough work \cite{Bers12}, Beresnevich established the expected lower bound  
\begin{equation*}
    N_{\cM}^{\textrm{rat}}(Q, \delta)\gg \delta^{n-1}Q^2
\end{equation*}
for analytic, nondegenerate curves in the (wider) range $\delta>Q^{-\frac{3}{2n-1}}$. This result was recently extended to smooth, nondegenerate curves in \cite{sst2023rational}, up to $Q^{\varepsilon}$ losses.

\subsection{Proof outline and structure of the paper}
Our main objective is to provide a streamlined Fourier analytic approach to estimating integral points near nondegenerate space curves. The first few steps are standard. Using Poisson summation, we express our lattice point count first as an exponential sum and then as a sum of oscillatory integrals sampled over frequencies arising from an $n$-dimensional lattice. More precisely, our oscillatory integrals have the form
\begin{equation}
    \label{eq osc int form}
    \int_{-1}^{1}  e^{2\pi i \gamma(t)\cdot\bdj} \,\ud t,
\end{equation}
where $\bdj\in \mathbb{Z}^n$ is a frequency vector from an $n$-dimensional lattice, satisfying $|\bdj|\ll \delta^{-1}$. We isolate the zero frequency term, corresponding to $\bdj = \mathbf{0}$, which yields the expected main contribution of order $\delta^{n-1}q$. This is carried out in Section~\ref{sec ini reduct}.

We refer to the total contributions from the nonzero frequencies $\bdj\in \mathbb{Z}^n\setminus\{0\}$ as the `error'. Matters are reduced to showing that the error is $o(\delta^{n-1}q)$ for a non-trivial range of $\delta$. The strategy is to use the non-degeneracy condition \eqref{eq cond non-deg gam} to extract decay from the oscillatory integrals \eqref{eq osc int form}. The classical van der Corput estimate for oscillatory integrals (see, for example, \cite[Chapter VIII]{Stein_book}) guarantees 
\begin{equation}
    \label{eq worst decay}
    \left|\int_{-1}^{1}  e^{2\pi i \gamma(t)\cdot\bdj} \,\ud t\right|\ll (1 + |\bdj\,|)^{-1/n}.
\end{equation}
Summing this over the set of frequencies satisfying $0<|\bdj|\ll \delta^{-1}$, it is not hard to see that the error is admissible in the restricted range
$\delta>q^{-\frac{1}{n^2-1}+\varepsilon}$. 

We can improve over the basic van der Corput bound described above by exploiting the fact that the decay rate in \eqref{eq worst decay} is the worst possible. The estimate \eqref{eq worst decay} is optimal only in the case when the first $n-1$ derivatives of the phase function $t \mapsto \gamma(t)\cdot\bdj$ vanish (or are close to $0$). This is only true for a very small number of frequencies $\bdj$, concentrated near a two-dimensional cone in the frequency space $\widehat{\R}^n$. By keeping track of the exact order of vanishing of the derivatives of the phase function, we can typically extract more decay from the associated oscillatory integral. The \textit{H-functional}, as defined in \eqref{eq def h func}, is a device that allows us to do this. In Theorem~\ref{thm: ACK}, under hypothesis \eqref{eq cond non-deg gam}, we present decay estimates for oscillatory integrals of the form \eqref{eq osc int form} in terms of the $H$-functional. These refined oscillatory integral estimates essentially appear in \cite{BGGIST2007} and are modelled after the classical work of Arkhipov--Chubarikov--Karatsuba \cite[p.13, Theorem 1.1]{ACK_book}. They allow us to estimate the error in terms of the size of the $H$-functional, ranging from the worst possible decay of the order of $(1 + |\bdj\,|)^{-1/n}$ to the strong decay rate of $(1 + |\bdj\,|)^{-1/2}$, with the set of contributing frequencies decomposed as level sets of the functional. This is the content of Section~\ref{sec osc int est}.

The next crucial ingredient is to reinterpret the sublevel sets of the $H$-functional as neighbourhoods of a two-dimensional cone $\Gamma^2$ in $\widehat{\R}^n$, which corresponds to the region of worst possible decay for our oscillatory integrals. This is carried out in Section~\ref{sec sublevel set H func}. This interpretation enables us to estimate the cardinality of the $\bdj \in \Z^n$ lying in the sublevel sets of the $H$-functional by counting lattice points lying in a neighbourhood of $\Gamma^2$. For the purposes of our argument, here we use trivial counting results based on volume estimates. We see that the oscillatory integral decays slowly for a relatively sparse set of $\bdj$. Putting these estimates together, we can bound the error in a more efficient way and therefore establish Theorem~\ref{thm main} in a better range of $\delta$ than that implied by just the van der Corput estimate. This is carried out in Section~\ref{sec proof main}.\medskip  

This paper is intended as a proof of concept. In particular, we relate the problem of counting lattice points near the curve $\gamma$ in $\mathbb{R}^n$ to a system of dual counting problems associated to neighbourhoods of nested surfaces in the frequency space $\widehat{\R}^n$. In principle, non-trivial estimates for the counting function associated to these dual surfaces should lead to better estimates for the original counting function associated to $\gamma$. Furthermore, one could attempt to exploit cancellation between the oscillatory integrals \eqref{eq osc int form}, in the spirit of the van der Corput A/B processes. However, there are substantial complications when attempting to implement these ideas. Nevertheless, we believe this is a promising direction for future research.

\subsection{Acknowledgement} The first author is supported by New Investigator Award UKRI097. The second author is supported by an Argelander grant from the University of Bonn. Both authors thank Jim Wright and the anonymous referee for helpful comments which improved the exposition of the article.

\section{Initial Reductions}
\label{sec ini reduct}






\subsection{A smooth variant of the counting function}
\label{ss smooth}
Our curve $\cM$ can be locally parametrised as the graph of a $C^{\infty}$ function of a single variable. Using compactness to cover $\cM$ by a finite number of local charts arising from such a parametrisation, and then by scaling, translation and a relabelling of the coordinates, we may assume without loss of generality that $\cM$ has the representation
\begin{equation}
    \label{eq curve}
    \cM := \big\{(t, f(t)) : t \in [-1,1] \big\}
\end{equation}
where $f:=(f_2, \ldots, f_n) \colon [-1,1] \to \R^{n-1}$ and $f_2, \ldots, f_n$ are restrictions of smooth functions defined on an open neighbourhood $U$ of $[-2,2]$. 

Having made the above reduction, it is not difficult to see that $N_{\cM}(q, \delta)$ is bounded above and below by a constant (depending on $\cM$) times the cardinality of the set
\begin{equation*}
\#\big\{ a \in \Z  : |a|\leq q,  \, \|qf(a/q)\| < \delta  \big\},
\end{equation*}
where $\|x\| := \dist(x, \Z^{n-1})$ for $x \in \R^{n-1}$.

Further, the non-degeneracy condition \eqref{eq cond non-deg gam} now reads
\begin{equation}
    \label{eq cond non-deg}
    \det \begin{bmatrix}
        1& f_2^{(1)}(t) & \ldots & f_n^{(1)}(t) \\
        0 & f_2^{(2)}(t) & \ldots & f_n^{(2)}(t) \\
        \vdots \\
        0 & f_2^{(n)}(t) & \ldots & f_n^{(n)}(t)
    \end{bmatrix} \neq 0 \qquad \textrm{for all $t \in U$.}
\end{equation}

Following \cite{huanghypers20, sst2023rational, st2023, rs24}, we shall work with a smooth version of the counting function $N_{\cM}(q, \delta)$.\medskip

Let $w^{\pm}\in C^{\infty}_c(\R)$ take values in $[0,1]$ and satisfy
\begin{itemize}
    \item $\supp w^- \subseteq [-1,1]$ and $w^-(t) = 1$ for all $t \in [-1/2,1/2]$;
    \item $\supp w^+ \subseteq [-2, 2]$ and $w^+(t) = 1$ for all $t \in [-1,1]$.
\end{itemize}
Similarly, let $\eta^{\pm}\in C^{\infty}_c(\R^{n-1})$ take values in $[0,1]$ and satisfy
\begin{itemize}
    \item $\supp \eta^- \subseteq [-1,1]^{n-1}$ and $\eta^-(y) = 1$ for all $y \in [-1/2,1/2]^{n-1}$;
    \item $\supp \eta^+ \subseteq [-2, 2]^{n-1}$ and $\eta^+(y) = 1$ for all $y \in [-1,1]^{n-1}$. 
\end{itemize}

For $0 < \delta < 1/4$, let $\Lambda(\delta)$ denote the set of all $x \in \R^{n-1}$ such that $\|x\| \leq \delta$; in other words, $\Lambda(\delta)$ is the $\delta$ neighbourhood of the integer lattice $\mathbb{Z}^{n-1}$. The characteristic function $\chi_{\Lambda(\delta)}$ of $\Lambda(\delta)$ then satisfies
$$\sum_{b\in \mathbb{Z}^{n-1}}\eta^{-}(\delta^{-1}(x-b))\leq \chi_{\Lambda(\delta)}(x)\leq \sum_{b\in \mathbb{Z}^{n-1}}\eta^+(\delta^{-1}(x-b))\qquad \textrm{for all $x \in {\R}^{n-1}$}.$$

For $w \in \{w^-, w^+\}$, $\eta\in \{\eta^-, \eta^+\}$ and $\delta\in \left(0, \frac{1}{4}\right)$, we define 
\begin{equation*}
    \fkN_{\cM}(q, \delta) = \fkN_{w, \eta, \cM}(q, \delta):=\sum_{{a\in \mathbb{Z}}}\sum_{b\in \mathbb{Z}^{n-1}}w(a/q)\, \eta\big(\delta^{-1}(qf(a/q)-b)\big).
\end{equation*}
We shall prove \eqref{eq main est} and \eqref{eq main lb} for $\fkN_{\cM}(q, \delta)$ instead of ${N}_{\mathcal{M}}(q, \delta)$. The required estimates for  ${N}_{\mathcal{M}}(q, \delta)$ then follow by approximating the characteristic functions of the interval $[-1,1]$ and the set $\Lambda(\delta)$ by the smooth weight functions $w^{\pm}$ and $\sum_{b\in \mathbb{Z}^{n-1}}\eta^{\pm}(\delta^{-1}(\cdot-b))$ respectively. 

\subsection{Exponential sum reformulation}  
Using the Poisson summation formula for $\eta\in\{\eta^+, \eta^-\}$, we get
\begin{equation}\label{eq: Poisson 2}
    \sum_{b \in \Z^{n-1}} \eta(\delta^{-1}(x-b)) = \delta^{n-1}\sum_{j \in \Z^{n-1}} \widehat{\eta}(\delta j) e^{2 \pi i x \cdot j}.
\end{equation}
Here $\widehat{\eta}$ denotes the Fourier transform of $\eta$ appropriately normalised. Using \eqref{eq: Poisson 2}, we can represent $\fkN_{\cM}(q, \delta)$ in terms of an exponential sum
\begin{equation}
\label{eq Nsm exp}
\fkN_{\cM}(q, \delta) = \delta^{n-1} \sum_{j \in \Z^{n-1}} \widehat{\eta}(\delta j)  \sum_{a \in \Z}  e^{2 \pi i qf(a/q) \cdot j} w(a/q).
\end{equation}
The above is an infinite sum in the variable $j$, however we can use the Schwartz decay of $\widehat{\eta}$ to conclude that, for any $\varepsilon > 0$, we have
\begin{equation*}
    \sum_{j \in \Z^{n-1}} \widehat{\eta}(\delta j)  e^{2 \pi i qf(a/q) \cdot j}= \sum_{\| j \|_{\infty} 
\leq q^{\varepsilon}/\delta} \widehat{\eta}(\delta j)  e^{2 \pi i qf(a/q) \cdot j}+ O_{\varepsilon}\left(\delta^{-(n-1)}q^{-1}\right).
\end{equation*}
Plugging the above into \eqref{eq Nsm exp} and isolating the frequency $j=0$, we get
\begin{equation}
\label{eq red 1}
\fkN_{\cM}(q, \delta)= c_0\delta^{n-1}q + \delta^{n-1}\sum_{1 \leq\| j \|_{\infty} 
\leq q^{\varepsilon}/\delta} \widehat{\eta}(\delta j)  \sum_{a \in \Z}  e^{2 \pi i qf(a/q) \cdot j} w(a/q) + O_{\varepsilon}(1).
\end{equation}
Here $c_0 := \hat{\eta}(0)\cdot q^{-1}\sum_{a\in\mathbb{Z}}w(a/q)$ satisfies $c_0 \sim 1$.




\subsection{From exponential sums to oscillatory integrals} For a fixed $j\in \mathbb{Z}^{n-1}$ with $1 \leq\| j \|_{\infty} \leq q^{\varepsilon}/\delta$, we temporarily focus on  the exponential sum
\begin{equation*}
     \sum_{a \in \Z}  e^{2 \pi i qf(a/q) \cdot j} w(a/q).
\end{equation*}
Our goal is to pass from this exponential sum to a corresponding oscillatory integral, which will be achieved by another application of the Poisson summation formula and integration by parts.

First, Poisson summation and a change of variables yield 
\begin{equation}
\label{eq pois 2}
  \sum_{a \in \Z} e^{2\pi i qf(a/q) \cdot j} w(a/q) = q \sum_{k \in \Z} \int_{\R} e^{2\pi i q(f(t) \cdot j - tk)} w(t)\,\ud t .
\end{equation}
For $\|j\|_{\infty} \leq q^{\varepsilon}\delta^{-1}$, we have $|\partial_t f(t)\cdot j| \leq M_\gamma q^{\varepsilon}\delta^{-1}/2$ for all $t \in \supp w$, where 
\begin{equation*}
    M_\gamma := 2\sup\{\|\partial_tf(t)\|_1 : t \in [-2,2]\}.
\end{equation*}
Non-stationary phase shows that most terms in the sum in $k$ only contribute a negligible error.
\begin{lemma}
For $\|j\|_{\infty} \leq q^{\varepsilon}\delta^{-1}$, we have
\begin{equation}\label{eq: Poisson 5}
     \sum_{|k|\geq  M_\gamma q^{\varepsilon}\delta^{-1}} \int_{\R} e^{2\pi i q(f(t) \cdot j - tk)} w(t)\,\ud t  =  O(q^{-n}).
\end{equation}    
\end{lemma}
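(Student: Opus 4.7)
The plan is to use repeated integration by parts (non-stationary phase). Fix $j \in \Z^{n-1}$ with $\|j\|_\infty \leq q^\varepsilon \delta^{-1}$ and $k \in \Z$ with $|k| \geq M_\gamma q^\varepsilon \delta^{-1}$, and set
\begin{equation*}
    \psi(t) := f(t)\cdot j - tk,
\end{equation*}
so that the integral in \eqref{eq: Poisson 5} has phase $2\pi q \psi(t)$. The first task is to check that the phase is genuinely non-stationary on $\supp w \subseteq [-2,2]$, with a quantitative lower bound. Since $|\partial_t f(t)\cdot j| \leq M_\gamma q^\varepsilon \delta^{-1}/2$ on $\supp w$ by the definition of $M_\gamma$, the triangle inequality gives
\begin{equation*}
    |\psi'(t)| \geq |k| - |\partial_t f(t)\cdot j| \geq |k|/2 \qquad \textrm{for all $t \in \supp w$.}
\end{equation*}

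The second task is to control the higher derivatives relative to this lower bound. For $m \geq 2$, we have $\psi^{(m)}(t) = f^{(m)}(t)\cdot j$, so the smoothness of $f$ on a neighbourhood of $[-2,2]$ yields $|\psi^{(m)}(t)| \leq C_m \|j\|_\infty \leq C_m q^\varepsilon \delta^{-1}$. Rescaling by $|k|$, the function $\tilde\psi := \psi/|k|$ satisfies $|\tilde\psi'| \geq 1/2$ and $|\tilde\psi^{(m)}| \leq C_m/M_\gamma$ on $\supp w$, uniformly in $k$, $j$ and the parameters $q, \delta, \varepsilon$. We are therefore in a setting where the standard non-stationary phase lemma (see, e.g., \cite[Chapter VIII]{Stein_book}) applies with large parameter $\lambda = 2\pi q |k|$ and produces, for any integer $N \geq 1$,
\begin{equation*}
    \Bigl|\int_{\R} e^{2\pi i q(f(t)\cdot j - tk)} w(t)\,\ud t\Bigr| \leq C_N (q|k|)^{-N},
\end{equation*}
where $C_N$ depends on $\|w\|_{C^N}$ and the derivative bounds above, but is independent of $q$, $j$, $k$ and $\delta$.

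The final step is to sum this bound over $|k| \geq K := M_\gamma q^\varepsilon \delta^{-1}$. Taking $N \geq n+1$, a direct computation gives
\begin{equation*}
    \sum_{|k| \geq K} (q|k|)^{-N} \ll_N q^{-N} K^{-(N-1)} = q^{-N - \varepsilon(N-1)} \delta^{N-1} \leq q^{-N},
\end{equation*}
where the last inequality uses $\delta < 1$. Choosing $N = n+1$ (say) yields the claimed bound $O(q^{-n})$. The main (only) subtle point in the argument is ensuring that the implicit constants in the iterated integration by parts are uniform in the parameters $j, k, q, \delta$; this is guaranteed by the rescaling $\psi \mapsto \tilde\psi$, which absorbs all the $q^\varepsilon/\delta$ growth of the higher derivatives into a bounded quantity thanks to the hypothesis $|k| \geq M_\gamma q^\varepsilon \delta^{-1}$.
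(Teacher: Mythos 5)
Your proof is correct and follows essentially the same approach as the paper: rescale the phase so that the first derivative is bounded below by an absolute constant and the higher derivatives are bounded above (thanks to $|k| \geq M_\gamma q^\varepsilon\delta^{-1}$), apply non-stationary phase/integration by parts to get superpolynomial decay in $q|k|$, then sum geometrically over $k$. The only cosmetic difference is the normalizing factor — you divide by $|k|$, the paper divides by $|k \mp M_\gamma q^\varepsilon\delta^{-1}/2|$ — but these are comparable on the relevant range of $k$, so the arguments are interchangeable.
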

\begin{proof}
For $k \in \Z$ satisfying $\pm k\geq  M_\gamma q^{\varepsilon}\delta^{-1}$, we let
\begin{equation*}
 \psi(t):=\frac{f(t)\cdot j-tk}{|k \mp M_\gamma q^{\varepsilon}\delta^{-1}/2|} \qquad \textrm{so that} \qquad
|\psi'(t)|=\frac{|k-f'(t)\cdot j|}{|k \mp M_\gamma q^{\varepsilon}\delta^{-1}/2|}\geq 1.   
\end{equation*}
Furthermore, 
$$|k\mp M_\gamma q^{\varepsilon}\delta^{-1}/2|\geq \max\left\{|f'(t)\cdot j|, \frac{|k|}{2}\right\}.$$
Therefore, we also have the upper bound $|\psi'(t)|\ll 1$. It is straightforward to check that the higher order derivatives of $\psi$ are bounded as well, by constants depending only on $f$. Thus, by repeated integration-by-parts we may estimate
\begin{equation}
    \label{eq k est nonst}
    \left|\int_{\R} e^{2\pi i q(f(t) \cdot j - tk)} w(t)\,\ud t\right|\ll \left(q|k \mp M_\gamma q^{\varepsilon}\delta^{-1}/2|\right)^{-n}.
\end{equation}
Summing up \eqref{eq k est nonst} in the range $|k|\geq M_\gamma q^{\varepsilon}\delta^{-1}$, we conclude the proof.
\end{proof}

Introducing the notation
\begin{equation*}
    \cI_{\cM}(q; k, j) := \int_{\R} e^{2\pi i q(f(t) \cdot j - t \cdot k)} w(t)\,\ud t,
\end{equation*}
we may combine \eqref{eq red 1}, \eqref{eq pois 2} and \eqref{eq: Poisson 5} to conclude that
\begin{equation}
    \label{eq red 2}
   \fkN_{\cM}(q, \delta) = c_0\,\delta^{n-1}q + \delta^{n-1} q \sum_{\substack{j \in \Z^{n-1}\setminus\{0\}:\\ \|j\|_{\infty} \leq q^{\varepsilon}\delta^{-1}} } \widehat{\eta}(\delta j)  \sum_{\substack{k \in \Z \\ |k| \leq M_\gamma q^{\varepsilon} \delta^{-1}}} \cI_{\cM}(q; k, j)  +  O_{\varepsilon}(1).
\end{equation}
We group the indices $j$ and $k$ together to write $\bdj = (k, j)\in \mathbb{Z}\times\mathbb{Z}^{n-1}$, and let
\begin{equation*}
    \bdJ=\bdJ(q, \delta):=\{(k, j)\in \mathbb{Z}\times \mathbb{Z}^{n-1}: 0<\|j\|_{\infty}\leq q^{\varepsilon}\delta^{-1}, |k|\leq M_\gamma q^{\varepsilon}\delta^{-1}\}.
\end{equation*}
We now focus on estimating
\begin{equation}
\label{eq def sigma}
  \Sigma_{\cM}(q, \delta) := \delta^{n-1} q  \,\sum_{\bdj\in \bdJ} \widehat{\eta}(\delta j)\, \cI_{\cM}(q; \bdj).
\end{equation}
We shall deduce a crude bound for the above sum by applying the triangle inequality and estimating the $|\cI_{\cM}(q; \bdj)|$ individually.




\section{Oscillatory integral estimates}
\label{sec osc int est}







\subsection{An Arkhipov--Chubarikov--Karatsuba-type estimate} For a compact interval $I$ and $\gamma \colon I \to \R^n$ a parametrisation of a smooth curve, we define the \textit{$H$-functional} by
\begin{equation}
    \label{eq def h func}
    H_{\gamma}(\xi) := \inf_{t \in I} \max_{1 \leq r \leq n} \big|\gamma^{(r)}(t) \cdot \xi \big|^{1/r} \qquad \textrm{for all $\xi \in \widehat{\R}^n$.}
\end{equation}

\begin{theorem}[Arkhipov--Chubarikov--Karatsuba-type estimate] 
\label{thm: ACK}
Let $I\subseteq [-2, 2]$ be a compact interval, $\gamma \colon I \to \R^n$ be a smooth curve and  let $\xi \in \widehat{\R}^n$ satisfy 
\begin{equation}
    \label{eq der lb}
    \inf_{t\in I}|\gamma^{(\ell)}(t)\cdot \xi| > c|\xi|
\end{equation}
for some $c > 0$ and $1\leq \ell\leq n$. Then we have 
\begin{equation}
    \label{eq ACK}
    \Big|\int_I e^{2\pi i \gamma(t) \cdot \xi} \,\ud t\Big| \ll_{c, \gamma} (1 + H_{\gamma}(\xi))^{-1}. 
\end{equation}
\end{theorem}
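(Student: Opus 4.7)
The plan is to combine classical van der Corput estimates with a partition of $I$ adapted to the $H$-functional. Let $\phi(t) := \gamma(t)\cdot\xi$ and write $H := H_\gamma(\xi)$. As a preliminary reduction, one may assume $|\xi|$ is large in terms of $c$, $n$, and $\gamma$: for $|\xi|$ bounded, $H$ is bounded and the trivial estimate $\bigl|\int_I e^{2\pi i \phi(t)}\,\ud t\bigr|\leq|I|\leq 4$ already delivers \eqref{eq ACK}.

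For $|\xi|$ sufficiently large, the hypothesis \eqref{eq der lb} yields two key facts. First, $\phi^{(\ell)}$ has a fixed sign on $I$ and satisfies $|\phi^{(\ell)}(t)|\geq c|\xi|$ pointwise, whence $H\geq (c|\xi|)^{1/\ell}$. Combined with the trivial upper bound $|\phi^{(r)}(t)|\lesssim_\gamma|\xi|$, this forces the superlevel sets $A_r:=\{t\in I:|\phi^{(r)}(t)|\geq H^r\}$ to be empty for $r>\ell$ once $|\xi|$ exceeds a threshold depending only on $c$, $n$ and $\gamma$. Thus only $\{A_r\}_{r=1}^{\ell}$ are relevant, and by the definition of $H$ they cover $I$. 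Second, iterating Rolle's theorem downward from the non-vanishing $\phi^{(\ell)}$ shows that $\phi^{(r)}$ has at most $\ell-r$ zeros for each $r\leq\ell-1$, and is piecewise monotonic with at most $\ell-r$ pieces. Consequently each $A_r$ with $r\leq\ell-1$ has only $O_n(1)$ connected components, since its boundary consists of preimages of the two values $\pm H^r$ under a piecewise monotonic $\phi^{(r)}$.

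Gathering the endpoints of these components for $r=1,\dots,\ell-1$ together with the at most $\ell-2$ zeros of $\phi''$ partitions $I$ into $N=O_n(1)$ sub-intervals $J_1,\dots,J_N$. On each $J_i$, exactly one of the following alternatives holds: $J_i\subseteq A_r$ for some $r\in\{2,\dots,\ell-1\}$; $J_i\subseteq A_1$ with $\phi'$ monotonic on $J_i$; or $J_i$ avoids $\bigcup_{r<\ell}A_r$ entirely, in which case the covering property forces $J_i\subseteq A_\ell$. In each case, the classical van der Corput lemma of the appropriate order $r\in\{1,\dots,\ell\}$ yields $\bigl|\int_{J_i} e^{2\pi i\phi(t)}\,\ud t\bigr|\ll_n (H^r)^{-1/r}=H^{-1}$ (the first-order case uses monotonicity of $\phi'$, the higher-order cases do not). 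Summing the $N=O_n(1)$ contributions completes the proof of \eqref{eq ACK}.

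The principal subtlety lies in the partition step, specifically in verifying that the complement of $\bigcup_{r<\ell}A_r$ inside $I$ lies in $A_\ell$ and that the number of partition pieces can be bounded in terms of $n$ and $c$ alone, uniformly in $\xi$. Both rely crucially on the strong hypothesis \eqref{eq der lb}, which simultaneously eliminates the uncontrolled sets $A_r$ for $r>\ell$ from the analysis and roots the Rolle chain at $\phi^{(\ell)}$, allowing control of the level crossings of all lower-order derivatives.
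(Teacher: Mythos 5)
Your argument is correct and follows essentially the same strategy as the paper's: partition $I$ into $O_n(1)$ closed subintervals adapted to the superlevel sets $A_r$ of $|\phi^{(r)}|$, with the number of pieces controlled by Rolle's theorem descending from the non-vanishing $\phi^{(\ell)}$, and then apply van der Corput of the appropriate order on each piece. The organizational difference --- your up-front reduction to large $|\xi|$ so that $A_r = \emptyset$ for $r > \ell$, letting you conclude directly that a piece disjoint from $A_1,\dots,A_{\ell-1}$ lies in $A_\ell$, versus the paper keeping $|\xi|$ general, deriving $1 + H_{\gamma}(\xi) \ll (1+|\xi|)^{1/\ell}$ on such a piece, and then applying $\ell$-th order van der Corput directly from hypothesis \eqref{eq der lb} --- is cosmetic.

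One step that fails as written, and which the paper's own proof also quietly sidesteps, is the case $\ell = 1$. Your bound of ``at most $\ell - 2$ zeros of $\phi''$'' comes from the Rolle chain rooted at $\phi^{(\ell)}$ and is vacuous when $\ell = 1$, so the monotonicity of $\phi'$ required for the first-order van der Corput estimate on a piece $J_i \subseteq A_1$ is not secured (for $\ell = 1$ the non-vanishing of $\phi^{(\ell)} = \phi'$ says nothing about sign changes of $\phi''$). This case admits a direct fix: for $\ell = 1$ and $|\xi|$ large one has $|\phi'| \geq H \gg_c |\xi|$ on all of $I$, and a single integration by parts, using $\int_I |\phi''|\,\ud t \ll_{\gamma} |\xi| \ll_c H$, gives the $O_c(H^{-1})$ bound with no monotonicity hypothesis at all. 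It would be worth adding a sentence to this effect.
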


An exposition of the above theorem in the special case where $\gamma = \gamma_{\circ}$ is the moment curve be found in the monograph of Arkhipov--Chubarikov--Karatsuba \cite[p.13, Theorem 1.1]{ACK_book}. Taking $\gamma = \gamma_{\circ}$ corresponds to studying oscillatory integrals with phase functions $\gamma_{\circ}(t) \cdot \xi = \xi_1 t + \cdots + \xi_n t^n$ given by general univariate polynomials. Recently, dall'Ara--Wright \cite{dAW} gave an interesting new proof of this classical case, which also extends to polynomials over general local fields. Furthermore, Arkhipov--Chubarikov--Karatsuba \cite[Theorem 1.1$^\prime$]{ACK_book} also consider oscillatory integrals with non-polynomial phases, under an additional monotonicity hypothesis on the derivatives of the phase; this result is closely related to Theorem~\ref{thm: ACK} above. Theorem~\ref{thm: ACK} is a very precise and arguably definitive estimate, at least for the class of univariate polynomial phases. Its sharpness is discussed in detail in \cite{ACK_book, dAW} and in \cite{dAW} it is shown to imply many other results in the oscillatory integral literature. 

Estimates of the form \eqref{eq ACK} for general smooth functions satisfying \eqref{eq cond non-deg gam} were considered in \cite[Section 4]{BGGIST2007}, where the authors remark that they are immediate from van der Corput's lemma. To ensure that our exposition is self-contained, however, we include a detailed proof.

\begin{proof}[Proof (of Theorem~\ref{thm: ACK})] Fix $\xi \in \widehat{\R}^n$ satisfying $|\gamma^{(\ell)}(t) \cdot \xi| > c|\xi|$ for all $t \in I$, for some $c > 0$ and $1\leq \ell\leq n$. Define $\beta_m: I\to \mathbb{R}$ by $\beta_m(t) := \gamma^{(m)}(t) \cdot \xi$ for $t \in I$ and all $1 \leq m \leq n$. From our hypotheses, the function $\beta_\ell$ is nonvanishing on $I$. 
\medskip

We first consider the regime when $\ell \geq 2$. We claim that for each $1 \leq r \leq \ell-1$, there exists a collection $\cI_r$ of at most $2^{\ell-r-1}$ closed intervals which form a partition of $I$ and are such that the map $\beta_m$ is strictly monotone on $I_r$ for all $I_r \in \cI_r$ and all $r \leq m \leq \ell-1$. We prove this by induction.\medskip

Our earlier observations imply that $\beta_{\ell-1}$ has no critical points in $I$, and so this function is strictly monotone on the whole interval. Thus, we can take $\cI_{\ell-1} := \{I\}$, which establishes the base case.\medskip

Let $2 \leq r \leq \ell-1$ and assume, as an induction hypothesis, that there exists a partition $\cI_r$ satisfying the desired properties. Thus, given any $I_r \in \cI_r$, we know that $\beta_r$ is strictly monotone on $I_r$ and so can have at most one zero on this interval. Thus, $\beta_{r-1}$ has at most one critical point on $I_r$ and so we may partition this interval into two subintervals such that $\beta_{r-1}$ is strictly monotone on each. Let $\cI_{r-1}$ denote the collection of all intervals formed in this way. Thus, $\cI_{r-1}$ forms a partition of $I$ into at most $2 \cdot \#\cI_r \leq 2^{\ell-r}$ closed intervals, and each of the functions $\beta_m$ is strictly monotone on $I_{r-1}$ for all $I_{r-1} \in \cI_{r-1}$ and all $r-1 \leq m \leq \ell-1$. This closes the induction.\medskip

We further refine the partition $\cI_1$ as follows. Given any $I_1 \in \cI_1$, since the functions $\beta_r$ are strictly monotone on $I_1$ for $1 \leq r \leq \ell-1$, each set
\begin{equation}\label{eq: ACK 1}
\big\{t \in I_1 : |\beta_r(t)| < H_{\gamma}(\xi)^r \big\}
\end{equation}
is an open subinterval of $I_1$ and each set
\begin{equation}\label{eq: ACK 2}
\big\{t \in I_1 : |\beta_r(t)| \geq H_{\gamma}(\xi)^r \big\}
\end{equation}
is a union of at most two disjoint closed subintervals of $I_1$. Let $\cJ_r(I_1)$ denote the collection of closed intervals formed by taking the closure of \eqref{eq: ACK 1} and the constituent intervals of \eqref{eq: ACK 2}. Thus, $\cJ_r(I_1)$ is a partition of $I_1$ into at most three closed subintervals. We then let $\cJ(I_1)$ denote a common refinement of the partitions $\cJ_r(I_1)$ over all $1 \leq r \leq \ell-1$. Finally, we let $\cJ$ denote the union of the $\cJ(I_1)$ over all $I_1 \in \cI_1$.\medskip

The collection $\cJ$ has the following properties:
\begin{enumerate}[i)]
    \item $\cJ$ is a partition of $I$ into (essentially disjoint) closed subintervals;
    \item $\#\cJ = O_n(1)$;
    \item For each $J \in \cJ$ and each $1 \leq r \leq \ell-1$, either $|\beta_r(t)| \geq H_{\gamma}(\xi)^r$ for all $t \in J$ or $|\beta_r(t)| < H_{\gamma}(\xi)^r$ for all $t \in \mathrm{int}\,J$;
    \item $\beta_1$ is monotone on $J$ for each $J \in \cJ$.
\end{enumerate}

Fixing $J \in \cJ$, by the triangle inequality it suffices to show 
\begin{equation}\label{eq: ACK 3}
    \Big|\int_J e^{2\pi i \gamma(t) \cdot \xi} \,\ud t\Big| \ll (1 + H_{\gamma}(\xi))^{-1}.
\end{equation}
Suppose there exists some $1 \leq r \leq \ell-1$ such that $|\beta_r(t)| \geq H_{\gamma}(\xi)^r$ for all $t \in J$. The desired inequality \eqref{eq: ACK 3} then follows by van der Corput's lemma with $r$ derivatives (see, for example, \cite[Chapter VIII, Proposition 2]{Stein_book}). Moreover, note that property iv) of the decomposition ensures that we may also apply van der Corput in the $r = 1$ case.\medskip

In light of the above observation and the properties of the decomposition, we may assume $|\beta_r(t)| < H_{\gamma}(\xi)^r$ for all $t \in \mathrm{int}\,J$ and all $1 \leq r \leq \ell-1$. Fixing $t_0 \in \mathrm{int}\,J$, from the definition of the $H$-functional, there exists some $1 \leq r_0 \leq n$ such that 
\begin{equation*}
    H_{\gamma}(\xi) \leq \big| \gamma^{(r_0)}(t_0) \cdot \xi \big|^{1/r_0} = |\beta_{r_0}(t_0)|^{1/r_0}.
\end{equation*}
However, from our earlier assumption this forces $r_0 \geq \ell$. We have the chain of inequalities 
$$1+H_{\gamma}(\xi) \leq 1+\big| \gamma^{(r_0)}(t_0) \cdot \xi \big|^{1/r_0}\ll_{\gamma} \left(1+|\xi|\right)^{1/r_0}\leq  (1+|\xi|)^{1/\ell}.$$
On the other hand, by our hypothesis \eqref{eq der lb} and van der Corput's inequality with $\ell$ derivatives,
\begin{equation*}
  \Big|\int_J e^{2\pi i \gamma(t)\cdot\xi} \,\ud t\Big| \ll_c (1 + |\xi\,|)^{-1/\ell}.
\end{equation*}
Combining these observations, we conclude that 
$$\Big|\int_{J} e^{2\pi i \gamma(t)\cdot\xi} \,\ud t\Big| \ll_c (1 + |\xi\,|)^{-1/\ell}\ll_{\gamma} (1+H_{\gamma}(\xi))^{-1}, $$
as required. \medskip

The case when $\ell=1$ is much simpler. In particular, integration-by-parts gives
\begin{equation*}
  \Big|\int_I e^{2\pi i \gamma(t)\cdot\xi} \,\ud t\Big| \ll_{c, \gamma} (1 + |\xi|)^{-1}.
\end{equation*}
Since $$1+H_{\gamma}(\xi)\ll_{\gamma} 1+\max_{1 \leq r \leq n} \big|\xi \big|^{1/r}\ll 1+|\xi|,$$
the desired inequality follows.
\end{proof}

\begin{corollary}
    \label{cor sub dec}
    Let $\gamma \colon [-2,2] \to \R^n$ be a smooth curve satisfying \eqref{eq cond non-deg gam} and $w \in \{w^-, w^+\}$. For $\xi \in \widehat{\R}^n\setminus\{0\}$, we have 
\begin{equation*}
    \Big|\int_{\R} e^{2\pi i \gamma(t) \cdot \xi} w(t)\,\ud t\Big| \ll_{\gamma} (1 + H_{\gamma}(\xi))^{-1}. 
\end{equation*}
\end{corollary}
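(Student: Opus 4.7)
\emph{Proof plan.} The plan is to reduce Corollary~\ref{cor sub dec} to Theorem~\ref{thm: ACK} via two steps: (a) a partition argument that upgrades the pointwise non-degeneracy \eqref{eq cond non-deg gam} to the uniform derivative lower bound \eqref{eq der lb} required by Theorem~\ref{thm: ACK}; and (b) an integration-by-parts to absorb the smooth cut-off $w$.

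For step (a), observe that $\det[\gamma^{(1)}(t), \ldots, \gamma^{(n)}(t)]$ is continuous and non-vanishing on the compact set $[-2,2]$, hence bounded below in absolute value. Combined with the uniform boundedness of the entries, this yields a constant $c_0 > 0$ (depending only on $\gamma$) such that $\max_{1 \leq \ell \leq n} |\gamma^{(\ell)}(t) \cdot \omega| \geq c_0$ for every $t \in [-2,2]$ and every unit $\omega \in \widehat{\R}^n$. Since each $\gamma^{(\ell+1)}$ is uniformly bounded on $[-2,2]$, the maps $t \mapsto \gamma^{(\ell)}(t) \cdot \omega$ are Lipschitz with a constant $M$ independent of $\omega$. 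Covering $[-2,2]$ by $K = O(1)$ closed subintervals $J_1, \ldots, J_K$ of length at most $c_0/(2M)$ (a cover depending only on $\gamma$, not on $\xi$), for each $\xi \in \widehat{\R}^n \setminus \{0\}$ and each $k$ I would choose $\ell_k = \ell_k(\xi) \in \{1, \ldots, n\}$ realising the maximum at a fixed reference point of $J_k$; the Lipschitz bound then propagates to $|\gamma^{(\ell_k)}(t) \cdot \xi| \geq (c_0/2)|\xi|$ throughout $J_k$, verifying \eqref{eq der lb} on each piece with absolute constants.

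For step (b), write $J_k = [a_k, b_k]$ and set $W_k(t) := \int_{a_k}^t e^{2\pi i \gamma(s) \cdot \xi}\,\ud s$. Integration by parts yields
\begin{equation*}
\int_{J_k} e^{2\pi i \gamma(t) \cdot \xi}\, w(t)\,\ud t = W_k(b_k)\, w(b_k) - \int_{J_k} W_k(t)\, w'(t)\,\ud t.
\end{equation*}
Each $W_k(t)$ is an unweighted oscillatory integral over $[a_k,t] \subseteq J_k$, which inherits \eqref{eq der lb} and is therefore controlled by Theorem~\ref{thm: ACK}: $|W_k(t)| \ll (1 + H_\gamma^{[a_k,t]}(\xi))^{-1}$, where $H_\gamma^J$ denotes the $H$-functional computed over the interval $J$. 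Monotonicity of the infimum gives $H_\gamma^{[a_k,t]}(\xi) \geq H_\gamma^{J_k}(\xi) \geq H_\gamma(\xi)$, so each term above is $\ll (1 + H_\gamma(\xi))^{-1}$ (with implicit constant depending on $\|w\|_\infty + \|w'\|_{L^1}$, which is admissible since $w \in \{w^-, w^+\}$ is fixed). Summing over the $K = O(1)$ pieces yields the desired estimate.

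The only mildly delicate point is ensuring the partition $\{J_k\}$ is independent of $\xi$, which hinges on the Lipschitz constant $M$ being uniform over unit directions $\omega \in \widehat{\R}^n$; this is automatic since $M$ depends only on the fixed collection $\{\gamma^{(\ell+1)}\}_{\ell=1}^n$. A streamlined alternative to the integration-by-parts step would be to observe that the proof of Theorem~\ref{thm: ACK} extends essentially verbatim when the integrand carries a smooth amplitude, since van der Corput's lemma admits such weights at the cost of a factor of $\|w\|_\infty + \|w'\|_{L^1}$.
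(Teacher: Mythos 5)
Your proposal follows the same route as the paper's proof: partition $[-2,2]$ into $O(1)$ subintervals on each of which the uniform derivative lower bound \eqref{eq der lb} holds for some $\ell$, apply Theorem~\ref{thm: ACK} on each piece, handle the cutoff $w$ by integration by parts, and sum via the triangle inequality. The paper sketches the partition as a compactness/continuity consequence and cites Stein for the cutoff step, whereas you spell out both explicitly (including the correct observation that the $H$-functional only increases when the interval shrinks), so this is just a more detailed rendering of the same argument.
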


Under the hypothesis of Corollary~\ref{cor sub dec}, an elementary argument shows that
\begin{equation}\label{eq: H functional coefficients}
    (1 + |\xi|)^{1/n} \ll 1 + H_{\gamma}(\xi).
\end{equation}
In this way, Corollary~\ref{cor sub dec} is (modulo uniformity in $\gamma$) a strengthening of the basic estimate following from van der Corput's lemma (see \eqref{eq worst decay}). 

\begin{proof}[Proof (of Corollary~\ref{cor sub dec})]
The condition \eqref{eq cond non-deg gam} ensures that there exists some $c > 0$, depending only on $\gamma$, such that  
\begin{equation*}
\sum_{r=1}^{n}|\gamma^{(r)}(t)\cdot\xi| > c|\xi| \qquad \textrm{for all $\xi \in \widehat{\R}^n \setminus\{0\}$ and all $t\in [-2,2]$.}
\end{equation*}
Using the continuity of the derivatives and compactness of both the set of unit vectors $S^{n-1}$ in $\mathbb{R}^{n}$ and the interval $[-2, 2]$, we can find a number $M>0$, depending only on $\gamma$ and $n$, such that the following holds. Given any $\xi\in\mathbb{R}^n \setminus \{0\}$, the interval $[-2,2]$ can be decomposed into at most $M$ subintervals such that \eqref{eq der lb} is true for some $1\leq \ell\leq n$ on each subinterval (with a smaller value of $c$ than that above). The desired inequality now follows by applying Theorem~\ref{thm: ACK} separately on each subinterval, together with a standard integration by parts argument to deal with the smooth cutoff $w$ (see, for example, \cite[p. 334]{Stein_book}), and the triangle inequality.
\end{proof}




\subsection{Level set decomposition} 
We now set $\gamma(t):=(t, f(t))$, where $f$ is as in the local parametrisation \eqref{eq curve} of $\cM$ and satisfies \eqref{eq cond non-deg}. We estimate the right-hand side of \eqref{eq def sigma} using Corollary~\ref{cor sub dec} to obtain
\begin{equation*}
    \Sigma_{\cM}(q, \delta) \ll \delta^{n-1} q  \,\sum_{\bdj\in \bdJ} \widehat{\eta}(\delta j)\, H(q; \bdj)^{-1}
\end{equation*}
where
\begin{equation*}
  H(q;\bdj):=H_{\gamma}(q\bdj).  
\end{equation*}
We decompose the above sum in $\bdj$ dyadically, giving
\begin{equation*}
  \Sigma_{\cM}(q, \delta) \ll \delta^{n-1}  q \sum_{\substack{1 \leq R \leq 2M_\gamma q^{\varepsilon}\delta^{-1} \\ R\,\mathrm{dyadic}}}\sum_{\substack{\bdj \in \Z^n \\ R/2 \leq |\,\bdj\,| < R}}    H(q; \bdj)^{-1};
\end{equation*}
here, as the notation suggests, the sum in $R$ is over all integers of the form $R = 2^k$ for $k \in \N_0$ with $0 \leq k \leq \log_2 (M_\gamma q^{\varepsilon}\delta^{-1})+1$.

Given $R, \lambda > 0$ and $q\in \mathbb{Z}$ with $q\geq 1$, define the level sets
\begin{equation}\label{eq: sublevel}
    S^{q,R}( \lambda) := \big\{\bdj \in \bdJ  : R/2 \leq |\,\bdj\,| < R \textrm{ and } \lambda/2 \leq H(q; \bdj) < \lambda\big\}.
\end{equation}
It follows from \eqref{eq: H functional coefficients} that there exists some $c > 0$, depending only $n$ and $\gamma$, such that
\begin{equation}
\label{eq cdef}
  2c(q R)^{1/n} \leq  H(q; \bdj) \qquad \textrm{for $\bdj \in \bdJ$ with $R/2 \leq |\bdj| < R$}.
\end{equation}
We can therefore estimate 
\begin{equation*}
    \sum_{\substack{1 \leq R \leq 2M_\gamma q^{\varepsilon}\delta^{-1} \\ R\, \mathrm{dyadic}}}\sum_{\substack{\bdj \in \bdJ: \\ R/2 \leq |\,\bdj\,| < R}}    H(q; \bdj)^{-1}\ll \sum_{\substack{1\leq R\leq 2M_\gamma q^{\varepsilon}\delta^{-1} \\ R\,\mathrm{dyadic}}}  \sum_{\substack{c(qR)^{1/n} \leq \lambda  \\ \lambda\, \mathrm{dyadic}}} \#S^{q,R}( \lambda)\cdot \lambda^{-1}.
\end{equation*}
Here we have used the definition of the sublevel set from \eqref{eq: sublevel} to bound 
\begin{equation*}
  H(q; \bdj)^{-1} \leq 2\lambda^{-1} \qquad \textrm{for all $\bdj \in S^{q,R}( \lambda)$}.
\end{equation*}
Further, we have $$H(q; \bdj)^{-1} \leq (qR)^{-1/2}$$ for $\bdj\in \bdJ$ with $R/2\leq |\bdj|< R$ not contained in the set
$$ \bigcup_{\substack{c(qR)^{1/n} \leq \lambda\leq (qR)^{1/2}  \\ \lambda\, \mathrm{dyadic}}} S^{q,R}(\lambda).$$
Dividing the dyadic summation range in $\lambda$ into two regimes,
\begin{equation}
    \label{eq red 4}
    \Sigma_{\cM}(q, \delta) \ll \Sigma_{\cM}^{\mathrm{near}}(q, \delta) + \Sigma_{\cM}^{\mathrm{far}}(q, \delta) 
\end{equation}
where
\begin{align}
\label{eq def near}
    \Sigma_{\cM}^{\mathrm{near}}(q, \delta) &:= \delta^{n-1} q \sum_{\substack{1\leq R\leq 2M_\gamma q^{\varepsilon}\delta^{-1} \\ R\,\mathrm{dyadic}}}  \sum_{\substack{c(qR)^{1/n} \leq \lambda \leq (qR)^{1/2} \\\lambda\,  \mathrm{dyadic}}} \#S^{q,R}( \lambda)\cdot  \lambda^{-1},\, \textrm{ and } \\
    \Sigma_{\cM}^{\mathrm{far}}(q, \delta) &:= \delta^{n-1}  q \sum_{\substack{1\leq R\leq 2M_\gamma q^{\varepsilon}\delta^{-1} \\ R\,\mathrm{dyadic}}}  R^n (qR)^{-1/2}.\nonumber
\end{align}
 \begin{remark} The following remarks are intended to justify the \textit{near} and \textit{far} labels used in the above notation.
\begin{enumerate}[i)]
    \item The term $\Sigma_{\cM}^{\mathrm{near}}(q, \delta)$ comprises of contributions from $\cI_{\cM}(q; \bdj)$ with relatively weak decay, including those oscillatory integrals with the weakest possible decay $|\cI_{\cM}(q; \bdj)| \ll (qR)^{-1/n}$, as dictated by the van der Corput bound. We shall see below that there are very few lattice points $\bdj$ for which this weakest possible decay rate cannot be improved. Indeed, all such lattice points must lie \textit{near} a $2$-dimensional cone $\Gamma^2$ in $\R^n$. We refer to $\Gamma^2$ as the \textit{slow decay cone}. Moreover, the sets $S^{q,R}( \lambda)$ all lie in some neighbourhood of (that is, \textit{near} to) the slow decay cone, where the size of this neighbourhood is dictated by the $\lambda$ parameter.
    \item By contrast, $\Sigma_{\cM}^{\mathrm{far}}(q, \delta)$ comprises of contributions from $\cI_{\cM}(q; \bdj)$ with the relatively strong decay $|\cI_{\cM}(q; \bdj)| \ll (qR)^{-1/2}$. The corresponding lattice points $\bdj$ lie \textit{far} from the slow decay cone.     
\end{enumerate}
\end{remark}

Clearly the far term satisfies
\begin{equation}
\label{eq far est}
    \Sigma_{\cM}^{\mathrm{far}}(q, \delta) \ll q^{n\varepsilon}\delta^{n-1}q\, (q^{-\frac{1}{2n-1}} \delta^{-1} )^{\frac{2n-1}{2}},
\end{equation}
which is favourable for our purposes.

\begin{remark}
    Observe that we already have the estimate $|\Sigma_{\cM}^{\mathrm{far}}(q, \delta)| \ll \delta^{n-1}q$ for $\delta > q^{-\frac{1}{2n-1}+\nu}$, which is much better than the one guaranteed by Theorem~\ref{thm main}. This is not surprising since, as explained above, the contributions to the term $\Sigma_{\cM}^{\mathrm{far}}(q, \delta)$ come from oscillatory integrals which enjoy strong decay. For $n=3$, this numerology almost matches that of the range $\delta>q^{-\frac{1}{5}}(\log q)^{-\frac{2}{5}}$ from \cite{huangintegral19}.    
\end{remark}





\section{Sublevel sets of the $H$-functional}
\label{sec sublevel set H func}



For each $\lambda > 0$, the sublevel set $S^{q,R}(\lambda)$ as defined in \eqref{eq: sublevel} is contained in the union 
$$\bigcup_{t\in [-2,2]}\big\{ \bdj\in\bdJ : R/2\leq |\bdj|<R \textrm{ and } |\bdj \cdot \gamma^{(r)}(t)| < q^{-1}\lambda^r,\, 1 \leq r \leq n \big\}. $$
Our goal here is to obtain a more detailed description of the geometry of the level sets $S^{q,R}( \lambda)$; for this, we adapt the Frenet frame formalism used in \cite{PS2007, BGHS2021}.



\subsection{The $(n-1)$-normal cone} Let $\bde_1, \dots, \bde_n$ denote the Frenet frame associated to the curve $\cM$; that is, let $$\{\bde_1(t), \dots, \bde_n(t)\}$$ be the orthonormal system obtained from applying the Gram--Schmidt process to the vectors $\gamma^{(1)}(t), \dots, \gamma^{(n)}(t)$. We recall the \textit{Frenet--Serret} equations
\begin{align*}
\bde_1'(t) &= \tilde{\kappa}_1(t) \bde_2(t),\\
\bde_r'(t) &= -\tilde{\kappa}_{r-1}(t) \bde_{r-1}(t) + \tilde{\kappa}_r(t) \bde_{r+1}(t) \qquad \textrm{for $2 \leq r \leq n-1$,} \\ 
\bde_n'(t) &= -\tilde{\kappa}_{n-1}(t) \bde_{n-1}(t)
\end{align*}
where $\tilde{\kappa}_1, \dots, \tilde{\kappa}_{n-1}$ are formed by taking the product of the curvature functions associated to $\gamma$ with the speed factor $|\gamma'(t)|$.\medskip 

For each $t \in U$, there exists some $2 \leq k \leq n$ such that $\bde_{n,k}(t) \neq 0$, where $\bde_{n,k}(t)$ denotes the $k$th component of $\bde_n(t)$. Indeed, otherwise $\bde_n(t)$ corresponds to $e_1$, the first coordinate basis vector, and the graph parametrised form of $\gamma$ implies that $\bde_n(t) \cdot \gamma'(t) = 1$. However, this contradicts the definition of the Frenet frame.\medskip

Using the above observation, together with continuity of the Frenet frame and compactness arguments, we can decompose the interval $[-2,2]$ into a finite union of open intervals $J$ with the following property: for each $J$ there exists $2 \leq k \leq n$ such that $\bde_{n,k}(t) \neq 0$ for all $t \in J$, where $\bde_{n,k}(t)$ denotes the $k$th component of $\bde_n(t)$. By passing to a local portion of the curve, scaling, translating and relabelling the coordinates, we may assume without loss of generality that $\bde_{n,n}(t) \neq 0$ for all $t \in U$. Note that it is possible to preserve the graph parametrised form of our curve under this reduction.\medskip

We define functions $G \colon [-2,2] \to \mathbb{R}^n$ and $g \colon [-2,2] \to \mathbb{R}^{n-1}$ by
\begin{equation}
    \label{eq G def}
    G(t) :=
    \frac{1}{\bde_{n,k}(t)} \bde_n(t), \qquad G(t)=
     \begin{bmatrix} 
     g(t) \\ 1 
     \end{bmatrix}.
\end{equation}
The following observation appeared in \cite{BGHS2021}, but we shall briefly recall the proof here.

\begin{lemma}\label{lem: nondeg} The map $g \colon [-2,2] \to \R^{n-1}$ parametrises a nondegenerate curve, in the sense that
\begin{equation*}
    \det
    \begin{pmatrix}
        g^{(1)}(t) & \cdots & g^{(n-1)}(t)
    \end{pmatrix}
    \neq 0
    \qquad 
    \textrm{for all $t \in [-2,2]$.}
\end{equation*}  
\end{lemma}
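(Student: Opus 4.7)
The plan is to exploit the Frenet--Serret relations to show that the derivatives of the normal vector $\bde_n(t)$, projected modulo $\bde_n(t)$, form a basis of the hyperplane $\mathrm{span}\{\bde_1(t),\dots,\bde_{n-1}(t)\}$; we will then transfer this independence from $G = \varphi\,\bde_n$ (with $\varphi := 1/\bde_{n,n}$) to $g$ using the fact that $\bde_{n,n}(t) \neq 0$.

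First I would prove, by induction on $r$, the structural formula
\begin{equation*}
    \bde_n^{(r)}(t) \;=\; (-1)^r\, \tilde{\kappa}_{n-1}(t)\cdots \tilde{\kappa}_{n-r}(t)\, \bde_{n-r}(t) \;+\; \bdv_r(t),
\end{equation*}
where $\bdv_r(t) \in \mathrm{span}\{\bde_{n-r+1}(t),\dots,\bde_n(t)\}$. The base case $r=1$ is just the last Frenet--Serret equation; the inductive step follows by differentiating and applying the Frenet--Serret equations term by term, noting that the only new contribution to the $\bde_{n-r}$ coefficient comes from differentiating $\bde_{n-r+1}$. A standard argument using the Gram--Schmidt construction and the non-degeneracy assumption \eqref{eq cond non-deg gam} shows that each speed-weighted curvature $\tilde{\kappa}_i$ is nonvanishing on $[-2,2]$, so the leading coefficient above is nonzero.

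Next I would pass to the quotient $\R^n/\mathrm{span}\{\bde_n(t)\}$. The display above shows that, modulo $\bde_n(t)$, the family $\{\bde_n^{(r)}(t)\}_{r=1}^{n-1}$ is upper-triangular with nonvanishing diagonal in the basis $\{\bde_{n-r}(t)\}_{r=1}^{n-1}$, hence linearly independent. Using the Leibniz rule,
\begin{equation*}
    G^{(r)}(t) \;=\; \varphi(t)\, \bde_n^{(r)}(t) \;+\; \sum_{s=1}^{r} \binom{r}{s}\varphi^{(s)}(t)\, \bde_n^{(r-s)}(t),
\end{equation*}
and a second triangular argument (this time in $r$), the same conclusion holds for $\{G^{(r)}(t)\}_{r=1}^{n-1}$ modulo $\bde_n(t)$, since $\varphi(t)\neq 0$.

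Finally, the relation $G(t) = (g(t), 1)^\top$ implies that $G^{(r)}(t)$ has vanishing last coordinate for every $r\geq 1$, so each $G^{(r)}(t)$ lies in the hyperplane $H := \{x\in\R^n : x_n = 0\}$. The natural projection $H \to \R^n/\mathrm{span}\{\bde_n(t)\}$ is a linear map between $(n-1)$-dimensional spaces, and its kernel is $H \cap \mathrm{span}\{\bde_n(t)\}$, which is trivial precisely because $\bde_{n,n}(t)\neq 0$ by our coordinate reduction. Therefore the $G^{(r)}(t)$ remain linearly independent inside $H$, which translates via the first $n-1$ coordinates into the linear independence of $g^{(1)}(t),\dots,g^{(n-1)}(t)$ in $\R^{n-1}$, as required. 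The main bookkeeping obstacle will be verifying the triangular structure in the induction (and the companion Leibniz computation) cleanly; everything else is linear algebra.
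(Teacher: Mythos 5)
Your argument is correct and follows essentially the same path as the paper: both extract an anti-triangular structure in the Frenet basis from the Frenet--Serret equations and then exploit the fact that each $G^{(r)}$ has zero last coordinate (together with $\bde_{n,n}\neq 0$) to deduce independence of the $g^{(r)}$. The paper works directly with the derivatives $G^{(r)}$ and records the nonzero anti-diagonal entries in \eqref{eq: nondeg 2}, whereas you first establish the structure for $\bde_n^{(r)}$ and transfer to $G^{(r)}$ via Leibniz before finishing with a quotient-space isomorphism, but the substance of the two proofs is the same.
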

\begin{proof}
 Let $k\in \{1, \ldots, n\}$. It follows from the Frenet--Serret equations above that
\begin{equation}
\label{eq: nondeg 1}
    G^{(r)}(t) \in \mathrm{span}\big\{ \bde_{n-r}(t), \dots, \bde_n(t) \big\} \qquad \textrm{for $1 \leq r \leq n-1$.}
\end{equation} 
Indeed, it is not difficult to show that
\begin{equation}\label{eq: nondeg 2}
G^{(r)}(t) \cdot \bde_{n-r}(t) = (-1)^r \prod_{j = n-r}^{n-1} \kappa_{j}(t) \bde_{n,n}(t)^{-1} \neq 0.
\end{equation}
Thus, if we express the vectors $G^{(r)}(t)$ in terms of the orthonormal basis formed by the $\bde_r(t)$, then \eqref{eq: nondeg 1} implies that the $n \times (n-1)$ matrix formed by these vectors is triangular with the nonzero elements \eqref{eq: nondeg 2} along the anti-diagonal. Thus, it has full rank. Consequently, since 
 \begin{equation*}
     G^{(r)}(t) = \begin{bmatrix} 
     g^{(r)}(t) \\ 0 
     \end{bmatrix}
     \qquad \textrm{for $1 \leq r \leq n-1$,}
 \end{equation*}
the desired result follows.
\end{proof}



\subsection{Reinterpreting the sublevel sets} 
\label{ss reinterp sublevel set}
For $t\in [-2,2]$, set
$$\pi_{\mu}^{q, R}(t):=\big\{ \bdj\in\bdJ : R/2\leq |\bdj|<R \textrm{ and } |\bdj\cdot\bde_r(t)| < C_\gamma q^{-1}\lambda^r,\, 1 \leq r \leq n \big\}. $$
Provided $C_\gamma \geq 1$ is chosen sufficiently large, 
\begin{equation*}
    S^{q,R}( \lambda) \subseteq \bigcup_{t\in [-2,2]} \pi^{q, R}(t);
\end{equation*}
indeed, this follows by noting $\bde_r(t)$ lies in the linear span of $\gamma^{(1)}(t), \ldots, \gamma^{(r)}(t)$.

Now, for any $\xi = (\xi_1, \dots, \xi_n) \in \widehat{\R}^n$ and $t\in [-2,2]$, we know that $\xi - \xi_n G(t) \in \widehat{\R}^{n-1} \times \{0\}$. By Lemma~\ref{lem: nondeg}, there exist coefficients $\rho_1, \dots, \rho_{n-1} \in \R$ such that
\begin{equation*}
    \xi - \xi_n G(t) = \sum_{r = 1}^{n-1} \rho_r G^{(r)}(t).
\end{equation*}
For each $1 \leq r \leq n-1$, we form the inner product of both sides of the above identity with the Frenet vector $\bde_r(t)$. Combining the resulting expressions with the linear independence relations inherent in \eqref{eq: nondeg 1}, the coefficients $\rho_r$ can be related to the numbers $\xi \cdot \bde_r(t)$ via a lower anti-triangular transformation, viz. 
\begin{equation*}
    \begin{bmatrix}
    \xi \cdot \bde_1(t) \\
    \vdots \\
    \xi \cdot \bde_{n-1}(t)
    \end{bmatrix}
    =
    \begin{bmatrix}
    0 & \cdots & G^{(n-1)}(t) \cdot \bde_1(t) \\
    \vdots & \ddots & \vdots \\
    G^{(1)}(t) \cdot \bde_{n-1}(t) & \cdots & G^{(n-1)}(t) \cdot \bde_{n-1}(t)
    \end{bmatrix}
    \begin{bmatrix}
    \rho_1 \\
    \vdots \\
     \rho_{n-1}
    \end{bmatrix}.
\end{equation*}
Now for $\bdj\in \pi^{q, R}(t)$ with $t\in [-2,2]$,
using the fact that the inverse of a nonsingular lower anti-triangular matrix is an upper anti-triangular matrix, it follows that
$$|\rho_r| \ll |\bdj\cdot\bde_r(t)| \ll q^{-1}\lambda^{n-r}$$ for $1 \leq r \leq n-1$.

It thus follows that for $t\in [-2,2]$, each set $\pi^{q, R}(t)$ is contained in a set $\alpha^{q, R}(t)$ consisting of all $ \bdj \in \bdJ$ for which 
\begin{equation}
\label{eq sub sub 2}
\bdj - j_n G(t) = \sum_{r = 1}^{n-1} \rho_r G^{(r)}(t) \quad \textrm{and} \quad |j_n| \leq R 
\end{equation}
hold for some $\rho_r \in \R$ satisfying $|\rho_r| \ll \min\left\{R, \frac{\lambda^{n-r}}{q}\right\}$ for $1\leq r\leq n-1$. In particular,
\begin{equation}\label{eq: alpha union}
    S^{q,R}( \lambda) \subseteq \bigcup_{t\in [-2,2]} \alpha^{q, R}(t).
\end{equation}
Thus, our task boils down to estimating the cardinality of the union appearing on the right-hand side of \eqref{eq: alpha union}.




\subsection{Volume bounds}

For $G$ as in \eqref{eq G def} and $T := (T_0, T_1, \dots, T_{n-2}) \in [1, \infty)^{n-1}$, we define 
\begin{equation*}
    \cF(T) := \left\{\sum_{r=0}^{n-2} s_rG^{(r)}(t): (t, s_0, \dots, s_{n-2}) \in \Omega(T) \right\},
\end{equation*}
where
\begin{equation*}
    \Omega(T) := [-2,2]\times \omega(T) \quad \textrm{for} \quad \omega(T) := [-T_0, T_0] \times \cdots \times [-T_{n-2}, T_{n-2}].
\end{equation*}

The frequencies contributing to the sum in \eqref{eq def near} can be decomposed as sublevel sets of the $H$-functional, based on its size $\lambda$. From the discussion in \S\ref{ss reinterp sublevel set}, and in particular \eqref{eq: alpha union}, it follows that the sublevel sets of the $H$-functional can be reinterpreted in terms of the set $\cF(T)$ with
$$T_0 \sim R,\, \qquad T_r \sim \max\left\{1, \min\left\{R, \frac{\lambda^{n-r}}{q}\right\}\right\}\qquad \textrm{for } 1\leq r\leq n-2.$$
To obtain an upper bound on the cardinality of these sublevel sets, we are interested in the number of lattice points contained within an $O(1)$-neighbourhood of $\cF(T)$. In view of this, we let $\cF_{\sigma}(T)$ denote the $\sigma$-neighbourhood of the set $\cF(T)$ for $\sigma > 0$.  Our lattice point count is a consequence of the following simple volume estimate.

\begin{lemma}
\label{lem triv count}
For $R \geq T_0, \ldots, T_{n-2}\geq \sigma \geq 1$ and $T=(\tvec)$, we have
\begin{equation*}
  \mathrm{Vol}_n(\cF_{\sigma}(T)) \ll_{\sigma} R\prod_{r=0}^{n-2} T_r,  
\end{equation*}
where $\mathrm{Vol}_n$ denotes the $n$-dimensional Lebesgue measure.
\end{lemma}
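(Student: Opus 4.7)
The plan is to parametrise $\cF_\sigma(T)$ by a smooth map and apply the change-of-variables formula. The starting point is that $\cF(T)$ is the image of the smooth map $\Phi : \Omega(T) \to \R^n$ defined by $\Phi(t, s_0, \ldots, s_{n-2}) := \sum_{r=0}^{n-2} s_r G^{(r)}(t)$, between spaces of equal dimension $n$. Its Jacobian is computed by multilinearity in the first column: since the remaining columns $\partial_{s_r}\Phi = G^{(r)}(t)$ for $r = 0, \ldots, n-2$ already involve $G, G^{(1)}, \ldots, G^{(n-2)}$, only the top-order term $s_{n-2} G^{(n-1)}(t)$ in $\partial_t\Phi = \sum_{r=0}^{n-2} s_r G^{(r+1)}(t)$ contributes a new column, yielding
$$J\Phi(t, s) = \pm s_{n-2}\, \Delta(t), \qquad \Delta(t) := \det\bigl[G(t), G^{(1)}(t), \ldots, G^{(n-1)}(t)\bigr].$$
Expanding $\Delta$ along the last row reduces it to $\pm\det[g'(t), \ldots, g^{(n-1)}(t)]$, which is nonvanishing by Lemma~\ref{lem: nondeg} and hence uniformly bounded above and below by compactness of $[-2, 2]$. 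The change-of-variables formula then yields
$$\mathrm{Vol}_n(\cF(T)) \leq \int_{\Omega(T)} |J\Phi|\,\ud t\,\ud s \ll T_0 T_1 \cdots T_{n-3}\, T_{n-2}^2 \leq R\prod_{r=0}^{n-2} T_r,$$
where the last inequality uses $T_{n-2} \leq R$.

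To pass to the $\sigma$-neighbourhood, I augment the parametrisation. Since $\{G^{(r)}(t)\}_{r=0}^{n-1}$ is a uniformly well-conditioned basis of $\R^n$ on $[-2, 2]$ (again by Lemma~\ref{lem: nondeg}), any $h \in B(0, \sigma)$ admits an expansion $h = \sum_{r=0}^{n-1} u_r(t) G^{(r)}(t)$ with $|u_r(t)| \lesssim \sigma$. Using $T_r \geq \sigma$, the coefficients $u_0, \ldots, u_{n-2}$ are absorbed into $s_0, \ldots, s_{n-2}$ at the cost of enlarging the respective ranges by a constant factor, leaving $u_{n-1}$ as a new coordinate in $[-C\sigma, C\sigma]$. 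Thus
$$\cF_\sigma(T) \subset \tilde{\Phi}(\tilde{\Omega}), \qquad \tilde{\Phi}(t, s_0, \ldots, s_{n-1}) := \sum_{r=0}^{n-1} s_r G^{(r)}(t),$$
where $\tilde{\Omega} := [-2, 2] \times \prod_{r=0}^{n-2}[-CT_r, CT_r] \times [-C\sigma, C\sigma]$.

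The final step—bounding $\mathrm{Vol}_n(\tilde{\Phi}(\tilde{\Omega}))$ for the $(n+1)$-to-$n$ map $\tilde{\Phi}$—is the main technical obstacle. I would slice along the new coordinate $s_{n-1} = s^*$. Using the identity $G^{(n)}(t) = \sum_{i=1}^{n-1} a_i(t) G^{(i)}(t)$, where the coefficient $a_0(t)$ vanishes because $G$ has constant last coordinate $1$ while each $G^{(r)}$ for $r \geq 1$ has last coordinate $0$, an analogous multilinearity expansion gives
$$J\tilde{\Phi}_{s^*}(t, s_0, \ldots, s_{n-2}) = \pm\bigl(s_{n-2} + s^* a_{n-1}(t)\bigr)\Delta(t),$$
so each slice has image volume $\ll T_0 \cdots T_{n-3} T_{n-2}^2$ uniformly in $s^* \in [-C\sigma, C\sigma]$. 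Aggregating these slice bounds is the crux. My approach is to apply the coarea formula directly to $\tilde{\Phi}$: a direct computation shows that $\ker D\tilde{\Phi}$ is generated by an explicit vector $v$ whose entries are polynomial in the $s_r$ and the $a_i(t)$, and this together with compactness of $\tilde{\Omega}$ permits a lower bound on typical fibre lengths depending only on $\gamma$ and $\sigma$. Combined with the upper bound $J_n\tilde{\Phi} \lesssim R$ and $|\tilde{\Omega}| \lesssim \sigma\prod_{r=0}^{n-2} T_r$, this delivers $\mathrm{Vol}_n(\cF_\sigma(T)) \ll_\sigma R\prod_{r=0}^{n-2} T_r$.
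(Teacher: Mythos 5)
Your opening steps are sound: the Jacobian identity $J\Phi(t,s)=\pm s_{n-2}\Delta(t)$ with $\Delta(t)=\pm\det[g'(t),\dots,g^{(n-1)}(t)]\neq 0$ is correct, the area formula does give $\mathrm{Vol}_n(\cF(T))\ll T_0\cdots T_{n-3}T_{n-2}^2\leq R\prod_r T_r$, and the containment $\cF_\sigma(T)\subset\tilde\Phi(\tilde\Omega)$ obtained by absorbing the perturbation into the frame coefficients (using $T_r\geq\sigma$) is legitimate. The gap is exactly where you place it, and your proposed resolution does not close it. First, a lower bound on \emph{typical} fibre lengths is useless here: if $\mathcal{H}^1(\tilde\Phi^{-1}(y)\cap\tilde\Omega)\gg_\sigma 1$ only for $y$ outside an exceptional set $E$, the coarea formula bounds $\mathrm{Vol}_n(\tilde\Phi(\tilde\Omega)\setminus E)$ and says nothing about $\mathrm{Vol}_n(E)$ --- but bounding the measure of a set of $y$'s with degenerate fibres is precisely the problem you are trying to solve. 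Second, a \emph{uniform} bound $\gg_\sigma 1$ independent of $R$ is not a consequence of compactness and is genuinely delicate: writing $y=\sum_{r}s_r(t)G^{(r)}(t)$ along a fibre and differentiating gives $s_0'=0$ and $s_k'=-s_{k-1}-s_{n-1}a_k(t)$, so the fibre traverses the $s$-box at speed up to $\sim R$ and its guaranteed $t$-extent is only $\sim\sigma/R$. Upgrading this to an $R$-independent arc-length bound requires showing that fast fibres remain fast throughout their short $t$-range (controlling $s''$), and then the resulting bound must be played off against the fact that $J_n\tilde\Phi$ is large ($\sim R$) exactly where fibres are short. None of this is supplied, so the final display of your proof is an assertion, not an argument.

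The paper's proof sidesteps all of this: it never slices and never computes a Jacobian. It pushes forward Lebesgue measure from the $n$-dimensional box $\Omega(T)$ (total mass $\asymp\prod_r T_r$, with no Jacobian weight) and uses only the two Lipschitz estimates $|\phi(t_0,s)-\phi(t_0,s_0)|\leq C_\gamma|s-s_0|$ and $|\phi(t,s)-\phi(t_0,s)|\leq C_\gamma R|t-t_0|$ to show that the $\phi$-preimage of every ball $B(x,2\sigma)$ with $x\in\cF_\sigma(T)$ contains a sub-box of $\Omega(T)$ of measure $\gg_\sigma R^{-1}$; convolving $\mu$ with $\chi_{B(0,2\sigma)}$ and integrating then yields the lemma, with the factor $R$ arising from the anisotropy $(\sigma/R)\times\sigma^{n-1}$ of that preimage box. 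This ``thickened fibre'' device is the rigorous replacement for your aggregation step; I would recommend adopting it rather than attempting to prove the uniform fibre-length bound.
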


\begin{proof} The set $\cF(T)$ is parametrised by the mapping 
\begin{equation*}
 \phi \colon \Omega(T) \to \R^n, \qquad \phi(t, \svec):= \sum_{r=0}^{n-2} s_rG^{(r)}(t).
\end{equation*}
Let $\mu$ denote the push-forward of the Lebesgue measure on $\Omega(T)$ under $\phi$, so that
\begin{equation*}
    \int_{\R^n} F \,\ud \mu := \int_{\Omega(T)} F\circ \phi(u)\,\ud u \qquad \textrm{for all $F \in C_c(\R^n)$}
\end{equation*}
and
\begin{equation}\label{eq: triv count 1}
    \mu(\R^n) = \textrm{Vol}_{n}\left(\Omega(T)\right).
\end{equation}
We note that $\mu$ is \textit{not} comparable with the $n$ dimensional Lebesgue measure on $\cF(T)$, due to the singular behaviour of the map $\phi$.\medskip

For $t \in [-2,2]$, consider the map $\phi_t \colon \omega(T) \to \R$ given by $\phi_t(s) := \phi(t, s)$. Then $\phi_t$ is linear with associated matrix 
\begin{equation*}
    D \phi_t := 
    \begin{bmatrix}
        G^{(0)}(t) & \cdots & G^{(n-2)}(t)
    \end{bmatrix}.
\end{equation*}
Furthermore, we can choose some $C_{\gamma} \geq 1$, depending only on $\gamma$, so that 
\begin{equation*}
   \sum_{r=0}^{n-1}|G^{(r)}(t)| \leq C_{\gamma} \qquad \textrm{for all $t \in [-2,2]$.}
\end{equation*}
 This implies that the $\ell^2$ matrix norm satisfies $\|D \phi_t\| \leq C_{\gamma}$ for all $t \in [-2,2]$.\medskip

Let $\chi_{B(0, 2\sigma)}$ denote the characteristic function of the ball $B(0, 2\sigma)$ in $\R^n$ centred at the origin with radius $2\sigma$.  Consider the convolution 
\begin{equation*}
    \chi_{B(0, 2\sigma)} \ast \mu(x) = \int_{\Omega(T)} \chi_{B(0, 2\sigma)}(x - \phi(u))\,\ud u.
\end{equation*}
If $x \in \cF_{\sigma}(T)$, then there exists some $(t_0, s_0) \in \Omega(T)$ such that $|x - \phi(t_0, s_0)| < \sigma$. By our choice of $C_{\gamma}$, we know that
\begin{equation*}
    |\phi(t_0, s) - \phi(t_0, s_0)| \leq C_{\gamma} |s - s_0| \qquad \textrm{for all $s \in \omega(T)$}
\end{equation*}
and 
\begin{equation*}
    |\phi(t_0, s) - \phi(t, s)| \leq C_{\gamma} R |t - t_0| \qquad \textrm{for all $(t,s) \in \Omega(T)$.}
\end{equation*}
In particular, if $|s - s_0| \leq \sigma/(4C_{\gamma})$ and $|t - t_0| < \sigma/(4C_{\gamma} R)$, then $|x - \phi(t, s)| < 2\sigma$. From this observation, we see that
\begin{equation*}
    \chi_{B(0, 2\sigma)} \ast \mu(x) \gg_{\sigma} R^{-1} \chi_{\cF_{\sigma}(T)}(x).
\end{equation*}
By the Fubini--Tonelli theorem,
\begin{equation*}
\mathrm{Vol}_n(\cF_{\sigma}(T)) \ll_{\sigma} R \int_{\R^n} \chi_{B(0, 2\sigma)} \ast \mu(x)\,\ud x = R \mu(\R^n) \|\chi_{B(0, 2\sigma)}\|_{L^1(\R^n)}.
\end{equation*}
Using \eqref{eq: triv count 1}, we conclude that
\begin{equation*}
  \mathrm{Vol}_n(\cF_{\sigma}(T))   \ll_{\sigma} R\, \textrm{Vol}_{n}\left(\Omega(T)\right) \ll R \prod_{r = 0}^{n-2} T_r,
\end{equation*}
as required. 
\end{proof}

For $T \in [1, \infty)^{n-1}$ and $\sigma > 0$, we are interested in the associated set of lattice points
\begin{equation*}
    \mathcal{L}_{\sigma}(T):= \cF_{\sigma}(T)\cap \mathbb{Z}^n.
\end{equation*}
The cardinality of this set can be crudely estimated using Lemma~\ref{lem triv count}.

\begin{corollary}
\label{cor triv count}
For $R \geq T_0, \ldots, T_{n-2}\geq \sigma \geq 1$ and $T=(\tvec)$, let $\mathcal{L}_{\sigma}(T)$ be as defined above. Then
    $$\# \mathcal{L}_{\sigma}(T)\ll_{\sigma} R \prod_{r=0}^{n-2} T_r.$$
\end{corollary}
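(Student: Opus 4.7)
The plan is to deduce the lattice point count from Lemma~\ref{lem triv count} by the standard "fatten each lattice point to a unit cube" trick, which is classical and will not present any serious difficulty.

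Concretely, I would proceed as follows. For each $v \in \mathcal{L}_{\sigma}(T)$, let $Q_v := v + [-1/2, 1/2]^n$ denote the unit cube centred at $v$. Since $v \in \mathbb{Z}^n$, the interiors of the $Q_v$ are pairwise disjoint and each cube has unit Lebesgue measure, so
\begin{equation*}
    \#\mathcal{L}_{\sigma}(T) = \sum_{v \in \mathcal{L}_{\sigma}(T)} \mathrm{Vol}_n(Q_v) = \mathrm{Vol}_n\Big( \bigcup_{v \in \mathcal{L}_{\sigma}(T)} Q_v \Big).
\end{equation*}
On the other hand, every point of $Q_v$ lies within distance $\sqrt{n}/2$ of $v \in \cF_{\sigma}(T)$, so by the triangle inequality
\begin{equation*}
    \bigcup_{v \in \mathcal{L}_{\sigma}(T)} Q_v \subseteq \cF_{\sigma + \sqrt{n}/2}(T).
\end{equation*}

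Combining the last two displays and applying Lemma~\ref{lem triv count} with $\sigma + \sqrt{n}/2$ in place of $\sigma$ (noting that $\sigma \geq 1$, so $\sigma + \sqrt{n}/2 \leq C_n \sigma$ for a constant $C_n$ depending only on $n$, and the hypothesis $T_0, \dots, T_{n-2} \geq \sigma + \sqrt{n}/2$ can be arranged up to adjusting absolute constants), we conclude
\begin{equation*}
    \#\mathcal{L}_{\sigma}(T) \leq \mathrm{Vol}_n(\cF_{\sigma + \sqrt{n}/2}(T)) \ll_{\sigma} R \prod_{r=0}^{n-2} T_r,
\end{equation*}
as desired. The only minor bookkeeping point is ensuring the hypotheses of Lemma~\ref{lem triv count} remain satisfied after enlarging $\sigma$ by the additive constant $\sqrt{n}/2$; this is harmless since the implicit constant is allowed to depend on $\sigma$ (and hence on $n$). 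There is no substantive obstacle here: the content of the corollary is entirely in Lemma~\ref{lem triv count}, and this step is a routine volume-to-lattice-count comparison.
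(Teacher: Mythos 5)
Your argument is correct and follows essentially the same route as the paper: the paper fattens each lattice point $a \in \cL_{\sigma}(T)$ to the Euclidean ball $B(a,1/4)$, uses pairwise disjointness and the containment $B(a,1/4) \subseteq \cF_{2\sigma}(T)$, and then invokes Lemma~\ref{lem triv count}, exactly mirroring your cube-based packing argument. The only cosmetic difference is cubes versus balls, and both versions rely on the same harmless observation that enlarging $\sigma$ by an $O_n(1)$ amount is absorbed into the implicit constant allowed to depend on $\sigma$.
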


\begin{proof} For every $a \in \cL_{\sigma}(T)$, the euclidean ball $B(a, 1/4)$ is contained in $\cF_{2 \sigma}(T)$. By disjointedness and the volume bound from Lemma~\ref{lem triv count} we therefore have
\begin{equation*}
    \#\cL_{\sigma}(T) \sim \sum_{a \in \cL_{\sigma}(T)} \mathrm{Vol}_n(B(a, 1/4)) \leq  \mathrm{Vol}_n(\cF_{2\sigma}(T)) \ll_{\sigma}  R \prod_{r=0}^{n-2} T_r,
\end{equation*}
as required. 
\end{proof}




\section{Proof of theorem~\ref{thm main}}
\label{sec proof main}

We now have all ingredients in place to prove our main theorem.

\begin{proof}[Proof (of Theorem~\ref{thm main})]
For any $\varepsilon>0$, by combining \eqref{eq red 2}, \eqref{eq def sigma}, \eqref{eq red 4} and \eqref{eq far est}, we obtain
\begin{align}
\nonumber
    \left|\fkN_{\cM}(q, \delta)-c_0\delta^{n-1} q\right| &\ll_{\varepsilon} \Sigma_{\cM}^{\textrm{near}}(q, \delta)+\delta^{n-1}q^{1+n\varepsilon}\, (q^{-\frac{1}{2n-1}} \delta^{-1} )^{\frac{2n - 1}{2}} +O_{\varepsilon}(1) \\
  \nonumber
    &= \Sigma_{\cM}^{\textrm{near}}(q, \delta)+\delta^{n-1}q^{1+n\varepsilon}\, (q^{-\kappa(n-1)} \delta^{-1} )^{n-1 +1 - \frac{1}{n-(n-1)+1}} \\
      \label{eq start est}
    & \qquad\qquad +O_{\varepsilon}(1),
\end{align}
where $\kappa(n-1) := \frac{1}{2n-1}$ and $c_0$ is positive with $c_0\sim 1$. Here $\Sigma_{\cM}^{\textrm{near}}(q, \delta)$ is as defined in \eqref{eq def near}. We express the right-hand exponents in the above form for consistency with later estimates. 

Let $\Lambda_n^{q, R}$ denote the set of all dyadic $\lambda$ satisfying $c(qR)^{1/n} \leq \lambda \leq (qR)^{1/2}$, where $c$ is as in \eqref{eq cdef}. 
Recall the definition of the level set $S^{q, R}(\lambda)$ from \eqref{eq: sublevel}. If $\bdj \in S^{q, R}(\lambda)$, then by \eqref{eq: alpha union}, there exists some $t\in [-2,2]$ such that $\bdj \in \alpha^{q, R}(t)$, as defined in \eqref{eq sub sub 2}. Consequently, $|j_n|\leq R$, and there exist $\rho_r \in \R$, with $|\rho_r|\ll R\min\left\{1, \frac{\lambda^{n-r}}{qR}\right\}$ for $1 \leq r \leq n-1$, such that 
\begin{equation*}
    \bdj = j_nG(t) + \sum_{r = 1}^{n-1} \rho_r G^{(r)}(t).
\end{equation*}
For a suitably large constant $C>1$ (depending on $\gamma$ and $n$), set
\begin{equation*}
    \rho_n^{q, R}(r; \lambda) := 
    \begin{cases}
    C R\min\Big\{1, \frac{\lambda^{n-r}}{q R}\Big\} & \textrm{ for } 1\leq r\leq n-1,\\
    1 & \textrm{ for } r=n.
    \end{cases}
\end{equation*}

For $\lambda\in \Lambda_n^{q, R}$, let $1 \leq D(\lambda) \leq n$ be the smallest value of $r$ such that $\rho_n^{q, R}(r; \lambda) \leq 1$. For $1\leq d\leq n$, define
\begin{equation*}
   \Lambda_n^{q, R}(d) := \{ \lambda \in \Lambda_n^{q, R} : D(\lambda) = d \}.  
\end{equation*}
For $\lambda\in \Lambda_n^{q, R}(n)$, we trivially estimate
\begin{equation}
    \label{eq sqrn est}
    \#S^{q, R}(\lambda)\ll R^n.
\end{equation}
In this case, we also have $\rho_n^{q, R}(n-1; \lambda)=C R\min\Big\{1, \frac{\lambda}{q R}\Big\}\geq 1$, which gives the lower bound
\begin{equation}
    \label{eq lbd lb n}
    \lambda\gg q.
\end{equation}
Otherwise, let $\lambda\in \Lambda_n^{q, R}(d)$ with $1\leq d\leq n-1$. Then we can write 
\begin{equation*}
    \bdj = j_nG(t) + \sum_{r = 1}^{n-2} \rho_r G^{(r)}(t)+O(1),
\end{equation*}
with
\begin{equation*}
|\rho_r|\leq \begin{cases}
    R &\textrm{ for } 1\leq r\leq d-2,\\
    CR\min\left\{1, \frac{\lambda^{n-d+1}}{qR}\right\} &\textrm{ for } r=d-1,\\
    1 &\textrm{ for } d\leq r\leq n-2.
\end{cases}    
\end{equation*}
Since $|j_n|\leq R$, for $\lambda\in \Lambda_n^{q, R}(d)$ with $1\leq d\leq n-1$, the set
$S^{q, R}(\lambda)$ is contained in $\cF_{\sigma}(T)$, where $\sigma>0$ is a constant depending only on $\gamma$ and $n$, and
$$T=\left(R, \underbrace{R,\ldots, R, }_{d-2 \textrm{ times}} C R\min\left\{1, \frac{\lambda^{n-d+1}}{qR}\right\}, 1, \ldots, 1\right).$$
An application of Corollary~\ref{cor triv count} therefore yields
\begin{align*}
    \#S^{q, R}(\lambda) &\ll \#\cL_{\sigma}\left(R, \underbrace{R, \ldots, R, }_{d-2 \textrm{ times}} C R\min\left\{1, \frac{\lambda^{n-d+1}}{qR}\right\}, 1, \ldots, 1\right)\\
    &\ll R^{d+1}\min\left\{1, \frac{\lambda^{n-d+1}}{qR}\right\}.
\end{align*}
 Thus,
\begin{equation*}
   \sum_{\lambda \in \Lambda_n^{q, R}(d)} \# S^{q, R}(\lambda)\cdot\lambda^{-1} \ll R^{d+1} \sum_{\lambda \in \Lambda_n^{q, R}(d)} \min\Big\{1, \frac{\lambda^{n-d+1}}{q R}\Big\} \lambda^{-1}.
\end{equation*}
Splitting the sum in $\lambda$, we have
\begin{equation*}
\sum_{\substack{\lambda \in \Lambda_n^{q, R}(d): \\ \lambda \geq (qR)^{\frac{1}{n-d+1}}}} \lambda^{-1}  + \sum_{\substack{\lambda \in \Lambda_n^{q, R} (d): \\ \lambda \leq (qR)^{\frac{1}{n-d+1}}}} \Big(\frac{\lambda^{n-d+1}}{q R}\Big) \lambda^{-1} \ll (qR)^{-\frac{1}{n-d+1}}.
\end{equation*}
For $1\leq d\leq n-1$, using the above, we can estimate, 
\begin{equation*}
 \delta^{n-1} q  \sum_{\substack{1\leq R\leq 2M_\gamma q^{\varepsilon}\delta^{-1} \\ R\,\mathrm{dyadic}}}  \sum_{\lambda \in \Lambda_n^{q, R}(d)} \# S^{q, R}(\lambda)\cdot\lambda^{-1} \ll \delta^{n-1}q^{1+n\varepsilon} \cdot\big( q^{-\frac{1}{n-d+1}} \delta^{-(d+1 - \frac{1}{n-d+1})}\Big).
\end{equation*}
On the other hand, for $\lambda\in \Lambda_n^{q, R}(n)$, using \eqref{eq sqrn est} and \eqref{eq lbd lb n}, we get
\begin{align}
    \nonumber
   \delta^{n-1} q  \sum_{\substack{1\leq R\leq 2M_\gamma q^{\varepsilon}\delta^{-1} \\ R\,\mathrm{dyadic}}}\sum_{\lambda \in \Lambda_n^{q, R}(n)} \# S^{q, R}(\lambda)\cdot\lambda^{-1} &\ll \delta^{n-1} q^{1+n\varepsilon}\big(q^{-1/n}\delta^{-1} \big)^n \\
   \label{eq n fin est}
   &= \delta^{n-1} q^{1+n\varepsilon}\big(q^{-\kappa(n)}\delta^{-1} \big)^{n +1 - \frac{1}{n-n + 1}} 
\end{align}
for $\kappa(n) := 1/n$. We express the right-hand exponents in the above form for consistency with the estimates below. 

Summing over $1\leq d\leq n$, we can estimate
\begin{align}
    \Sigma_{\cM}^{\mathrm{near}}(q, \delta) &= \delta^{n-1} q^{1+n\varepsilon} \sum_{\substack{1 \leq R \leq 2\delta^{-1} \\ R\,\mathrm{dyadic}}}  \sum_{\substack{c(qR)^{1/n} \leq \lambda \leq (qR)^{1/2} \\\lambda\, \mathrm{dyadic}}} \#S^{q,R}( \lambda)\cdot  \lambda^{-1}\nonumber\\
    & \ll \delta^{n-1} q^{1+n\varepsilon} \max_{1 \leq d \leq n} \big(q^{-\kappa(d)} \delta^{-1}\big)^{d+1 - \frac{1}{n-d+1}},
    \label{eq near est}
\end{align}
where 
\begin{equation*}
    \kappa(d):=\frac{1}{(n-d+1)(d+1)-1} \qquad \textrm{for $1 \leq d \leq n$.}
\end{equation*}
Note this definition is consistent with \eqref{eq start est} and \eqref{eq n fin est}. Thus, from \eqref{eq start est} and \eqref{eq near est}, we get
\begin{equation}
    \label{eq fin est 1}
     \left|\fkN_{\cM}(q, \delta)-c_0\delta^{n-1} q\right|\ll_{\varepsilon} \delta^{n-1} q^{1+n\varepsilon}\, \cE(q, \delta) + O_{\varepsilon}(1),
\end{equation}
where
\begin{equation}
    \label{eq error tm}
    \cE(q, \delta):= \max_{1 \leq d \leq n} \big(q^{-\kappa(d)} \delta^{-1}\big)^{d+1 - \frac{1}{n-d+1}}.
\end{equation}

The polynomial $-X^2 + (n+1)X +n$ has a unique critical point at $X = \frac{n+1}{2}$, which is a global maximum. Hence, if $n$ is odd, then 
\begin{equation*}
    \kappa(d) \leq \kappa\Big(\frac{n+1}{2}\Big)= \frac{4}{n(n+4) - 1} =: \beta(n) \qquad \textrm{for $1 \leq d \leq n$}\,,
\end{equation*}
while if $n$ is even, we have 
\begin{equation*}
    \kappa(d) \leq \kappa\Big(\frac{n}{2}\Big) = \kappa\Big(\frac{n+2}{2}\Big)=\frac{4}{n(n+4)} =: \beta(n) \qquad \textrm{for $1 \leq d \leq n$.}
\end{equation*}
Thus, recalling \eqref{eq error tm}, we have
\begin{equation}\label{eq E bound}
 q^{n\varepsilon}\cE(q, \delta)=O(q^{-\varepsilon}) \qquad \textrm{for all} \qquad \delta\geq q^{-\beta(n)+(n+1)\varepsilon}.
\end{equation}

To prove the lower bound in Theorem~\ref{thm main}, let $\nu>0$ be given. By choosing $\varepsilon := \nu/(n+1)$ in \eqref{eq fin est 1} and applying \eqref{eq E bound}, we obtain
\begin{equation*}
    \fkN_{\cM}(q, \delta)\gg_{\nu} \delta^{n-1} q  \qquad \textrm{for $\delta\geq q^{-\beta(n)+\nu}$,}
\end{equation*}
provided $q \gg_{\nu} 1$ is sufficiently large. We now take $w = w^-$ and $\eta=\eta^{-}$ (see \S \ref{ss smooth} for their definitions). For this choice of $w$ and $\eta$, and for $\delta\geq q^{-\beta(n)+\nu}$, we have
$$N_{\cM}(q, \delta)\geq \fkN_{\cM}(q, \delta)\gg_{\nu} \delta^{n-1} q\,,$$
which establishes \eqref{eq main lb} for $q \gg_{\nu} 1$.\medskip

To prove the upper bound in Theorem~\ref{thm main}, again let $\nu > 0$ be given. This time, we take $w = w^+$ and $\eta=\eta^+$. Then for any $\delta\in (0, 1/4)$ and $q\geq 1$, using \eqref{eq fin est 1},
we obtain
\begin{equation*}
    N_{\cM}(q, \delta)\leq \fkN_{\cM}(q, \delta)\ll_{\varepsilon} \delta^{n-1} q\left(1+q^{n\varepsilon} \cE(q, \delta)\right)+O_{\varepsilon}(1)
\end{equation*}
for all $\varepsilon > 0$. Observe that for a fixed $q$, the term $\cE(q, \delta)$ is decreasing as a function of $\delta$. We now distinguish two cases. 
When $\delta \leq  q^{-\beta(n)+(n+1)\varepsilon}$, since $N_{\cM}(q, \delta)$ is a non-increasing function of $\delta$, using \eqref{eq E bound} we can estimate
$$N_{\cM}(q, \delta)\ll N_{\cM}(q, q^{-\beta(n)+(n+1)\varepsilon})\ll_{\varepsilon} \left(q^{-\beta(n)+(n+1)\varepsilon}\right)^{n-1}q\leq q^{\Theta(n)+n^2\varepsilon},$$
where $\Theta(n)$ is as defined in \eqref{def theta}. Here we have used the identity $1 - (n-1)\beta(n) = \Theta(n)$.  On the other hand,  if $\delta\geq q^{-\beta(n)+(n+1)\varepsilon}$, then we can again use \eqref{eq fin est 1} and \eqref{eq E bound} to estimate
\begin{equation*}
    N_{\cM}(q, \delta) \ll_{\varepsilon} \delta^{n-1} q.
\end{equation*}
Thus, choosing $\varepsilon := \nu / n^2$, we can conclude in both cases that
$$N_{\cM}(q, \delta)\ll \delta^{n-1} q+ q^{\Theta(n)+\nu}.$$
This establishes \eqref{eq main est}, and finishes the proof.
\end{proof}




\bibliography{Reference}
\bibliographystyle{amsplain}

\end{document}